\numberwithin{equation}{section}
\numberwithin{equation}{section}
\def \R {{\rm I\kern -2.2pt R\hskip 1pt}}
\newcommand{\qed}{\rule{0.5em}{1.5ex}}
\newtheorem{proposition}{Proposition}
\newtheorem{remark}{Remark}
\newtheorem{theorem}{Theorem} 
\newtheorem{corollary}{Corollary} 
\newenvironment{proof}{\noindent {\bf Proof.\nopagebreak}}{~\qed}
\begin{document}

\begin{center} { \large \sc  Some power function distribution processes}\\
\vskip 0.1in {\bf Barry C. Arnold}\footnote{barnold@ucr.edu}
\\
\vskip 0.1in 
Department of Statistics, University of California, Riverside, USA. \\

\vskip 0.1in {\bf  Sachin Sachdeva \footnote{sachinstats007@gmail.com} and B.G. Manjunath \footnote{bgm@uohyd.ac.in}}
\\
\vskip 0.1in 
 School of Mathematics and Statistics, University of Hyderabad, Hyderabad, India. \\

\vspace{.3in}

April  25, 2023

\end{center}

 \begin{abstract}
 	It is known that all the proportional reversed hazard (PRH) processes can be derived by a marginal transformation applied to a power function distribution (PFD) process. Kundu \cite{kd22} investigated PRH processes that can be viewed as being obtained by marginal transformations applied to a particular PFD
 	process that will be described and investigated and will be called a Kundu process. In the present note, in addition to studying the Kundu process, we introduce a new PFD process having  Markovian  and stationarity properties.  We discuss distributional features of such processes, explore inferential aspects and include an example of applications of the PFD processes to  real-life data.
 \end{abstract}
 
 \noindent{\bf Key Words}: power function distribution processes, Pareto-distribution, moment methods, auto-correlation, stationarity, Markovian property
 
 \bigskip

 \bigskip
 
 \section{A recent construction and a proposed alternative}
 
 A recent article by Kundu \cite{kd22} discussed certain proportional reversed hazard (PRH) processes that can be obtained by a marginal transformation applied to a particular power function distribution (PFD) process. We will describe that PFD process and will call it a Kundu process. In addition to the Kundu process we will investigate a new stationary stochastic process with power function distribution marginals. It, like the Kundu process, will rely crucialy on the fact that maxima of independent power function variables will themselves have power function distributions. The new process has an autoreressive flavor involving power function innovation variables, but with maximization playing the role usually played by addition. In addition some attention will be focussed on higher order versions of these processes
 
 The reversed hazard function has been used extensively in analyzing lifetime data, in Engineering for life testing of machineries and in forensic science, c.f for example in  Crescenzo \cite{c00} and Gupta and Gupta \cite{gg07}. Many proportional reversed hazard models have received significat attention in the literature.  For example, exponentiated exponential by Gupta and Kundu \cite{gk99}, exponentiated Pareto by Shawky and Abu-Zinadah \cite{sa08}, exponentiated gamma by Gupta,Gupta and Gupta \cite{ggg98}  etc. It is then to be expected that PRH processes will be useful models for certailn engineering time series data settings. 
 
 Since the stochastic behavior of a PRH process is essentially determind by the properties of the underlying PFD process, it will be useful to investigate distributional prperties of the PFD processes discussed in this paper. In a separate repot, attention will be given to some of the PRH models obtainable from them via marginal transformations.
 \\
 \section{The Kundu process}
 The Kundu \cite{kd22} PFD process is defined as follows. Here and subsequently we will consider an i.i.d. 
 sequence, $\left\{U_n \right\} $, of $uniform(0,1)$ random variables. Define the process 
 $\left\{ X_n \right\} $ by
 \begin{equation}\label{PFDP}
 	X_n= \max \left\{U_n^{1/\alpha},U_{n-1}^{1/\beta} \right\},
 \end{equation}
 where $\alpha, \beta >0$. We refer to Figures \ref{fig1}--\ref{fig2} for examples of simulated sample paths of steps  $n=10$ and $n=100$.

 \begin{figure} [h!]
 	\begin{subfigure}{8cm}
 		\centering
 		\includegraphics[width=8cm]{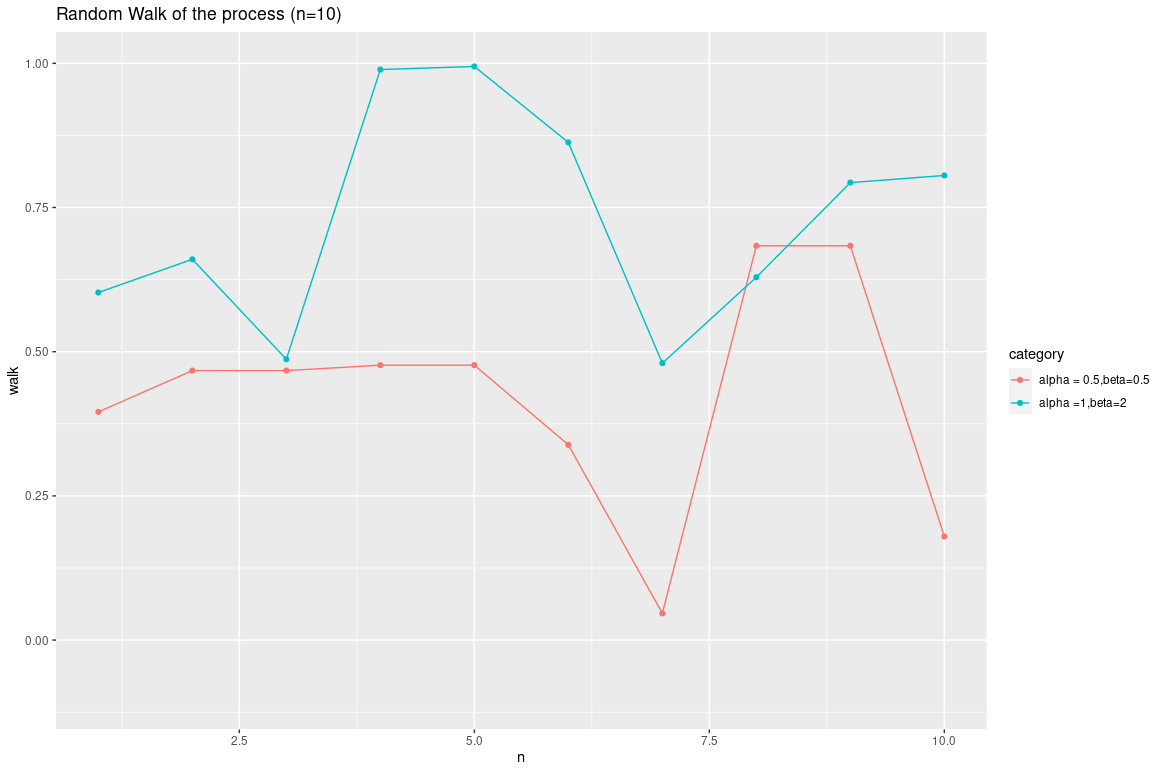}
 		\caption{$n=10$} 
 	\end{subfigure}
 	\begin{subfigure}{8cm}
 		\centering
 		\includegraphics[width=8cm]{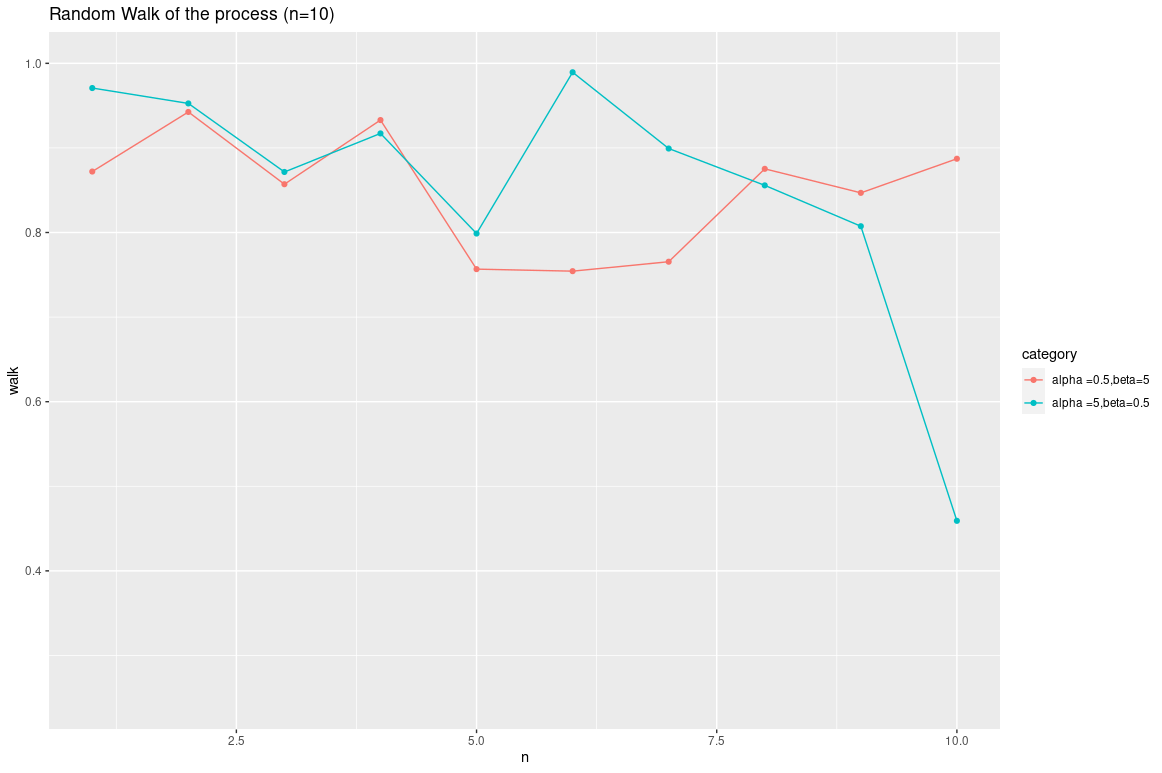}
 		\caption{$n=10$} 
 	\end{subfigure}
 	\caption{Sample Path (number of steps is $10$)}
 	\label{fig1}
 \end{figure}

 \begin{figure} \label{fig2} 
 	\begin{subfigure}{8cm}
 		\centering
 		\includegraphics[width=8cm]{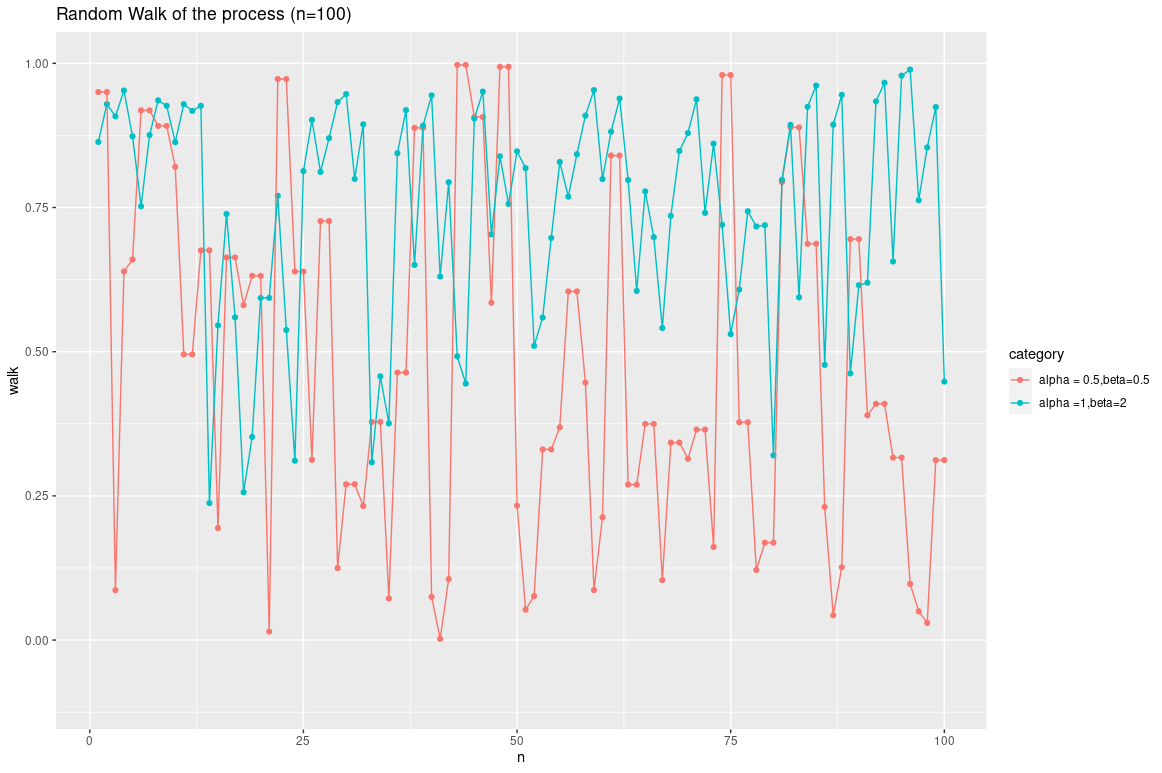}
 		\caption{$n=100$}
 	\end{subfigure}
 	\begin{subfigure}{8cm}
 		\centering
 		\includegraphics[width=8cm]{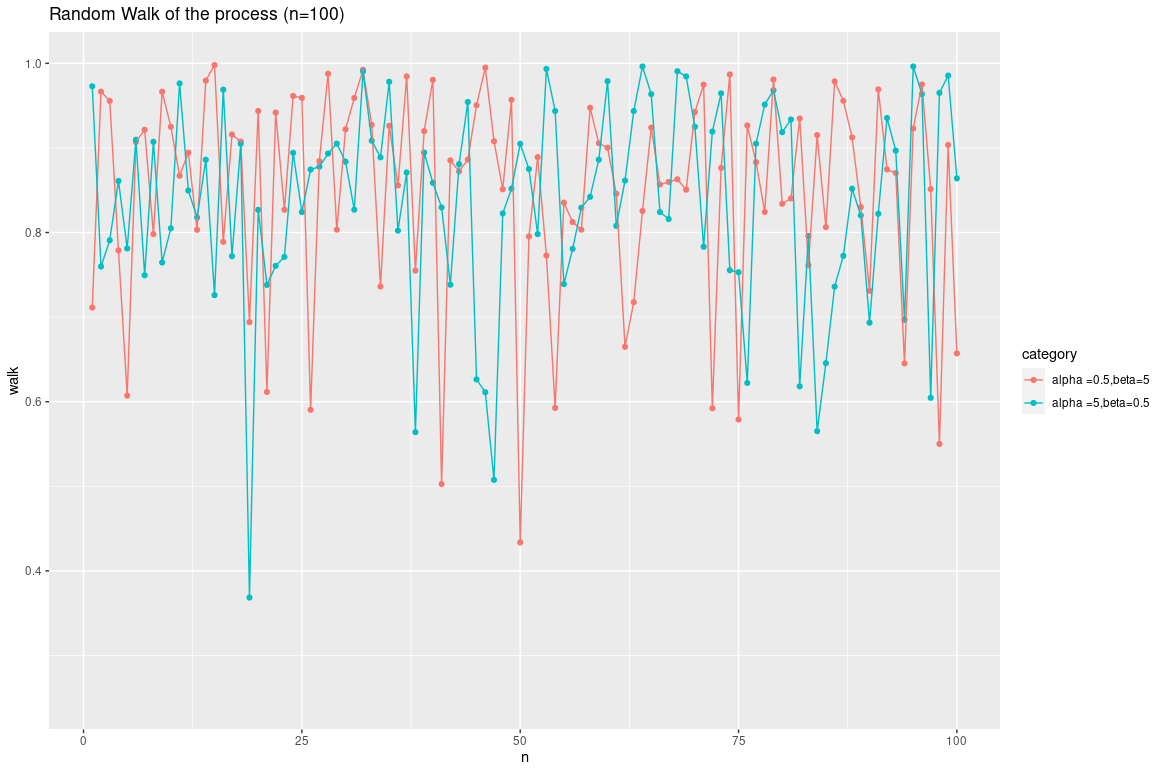}
 		\caption{$n=100$} 
 	\end{subfigure}
 	\caption{Sample Path (number of steps is $100$)}
 	\label{fig2}
 \end{figure}

 In the following subsection we derive some stochastic properties of the process defined in $(2.1)$.
 \subsection{Stochastic properties of the process defined in $(2.1)$}    
 
 \begin{theorem}
 	If the sequence of random variables $\{X_n\}$ is as defined in equation $(2.1)$, then the following statements are true:
 	\begin{enumerate} [label=(\alph*)]
 		\item For each $n$, $\{X_n\}$ has a PFD($\alpha + \beta$) distribution.
 		\item The joint distribution function of $(X_{n-1},X_n)$  is $x^{\alpha}_n x^{\beta}_{n-1} \min\{x^{\alpha}_n, x_{n-1}^{\beta}\}$
 		\item $\{X_n\}$ is a stationary non-Markovian process. 
 	\end{enumerate}
 \end{theorem}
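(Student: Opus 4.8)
The plan is to reduce every assertion about the $X_n$'s to a statement about the i.i.d.\ uniforms $\{U_n\}$, using throughout that $P(U_j^{1/\gamma}\le x)=x^\gamma$ on $(0,1)$, i.e.\ $U_j^{1/\gamma}\sim\mathrm{PFD}(\gamma)$, and that the maximum of independent PFD variables is again PFD. Part (a) is then immediate: by independence of $U_n$ and $U_{n-1}$, $P(X_n\le x)=x^\alpha\cdot x^\beta=x^{\alpha+\beta}$ on $(0,1)$. For part (b) I would write $\{X_{n-1}\le x_{n-1},X_n\le x_n\}$ as the intersection $\{U_{n-1}\le x_{n-1}^\alpha\}\cap\{U_{n-2}\le x_{n-1}^\beta\}\cap\{U_n\le x_n^\alpha\}\cap\{U_{n-1}\le x_n^\beta\}$, regroup by index, observe that the two constraints on $U_{n-1}$ combine to $\{U_{n-1}\le\min(x_{n-1}^\alpha,x_n^\beta)\}$, and use independence of $U_{n-2},U_{n-1},U_n$ to factor, obtaining $x_n^\alpha x_{n-1}^\beta\min(x_n^\alpha,x_{n-1}^\beta)$.

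For the stationarity half of (c), the point is that $(X_{n_1},\dots,X_{n_k})$ is one fixed measurable function of $(U_j)_{j\in I}$ with $I=\bigcup_i\{n_i-1,n_i\}$, and this function is unchanged by a common shift of all indices; since i.i.d.-ness of $\{U_n\}$ gives $(U_j)_{j\in I}\stackrel{d}{=}(U_j)_{j\in I+m}$, the finite-dimensional laws are shift-invariant.

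The genuinely delicate claim, and the one I expect to cause the most trouble, is that $\{X_n\}$ is \emph{not} Markovian --- delicate because the naive c.d.f.\ check is misleading. Using (b) and the analogous identity (derived exactly as in (b)) $P(X_{n-1}\le x,X_n\le y,X_{n+1}\le z)=x^\beta z^\alpha\min(x^\alpha,y^\beta)\min(y^\alpha,z^\beta)$, one finds $P(X_{n+1}\le z\mid X_{n-1}\le x,X_n\le y)=z^\alpha\min(y^\alpha,z^\beta)/y^\alpha$, free of $x$; but conditioning on the lower set $\{X_n\le y\}$ is not the Markov property, which conditions on $\{X_n=y\}$. To settle it I would note that $U_{n+1}$ is independent of $(X_{n-1},X_n,U_n)$, so for $z\in(0,1)$ both $P(X_{n+1}\le z\mid X_n,X_{n-1})$ and $P(X_{n+1}\le z\mid X_n)$ equal $z^\alpha$ times $P(U_n\le z^\beta\mid X_n,X_{n-1})$ and $P(U_n\le z^\beta\mid X_n)$ respectively; hence Markovianity is equivalent to $U_n\perp X_{n-1}\mid X_n$.

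That conditional independence fails. On the positive-probability event $\{X_{n-1}^\alpha<X_n^\beta\}$ (whose probability can be computed explicitly) the maximum defining $X_n$ cannot be attained at $U_{n-1}^{1/\beta}$, since otherwise $X_{n-1}\ge U_{n-1}^{1/\alpha}=X_n^{\beta/\alpha}$; hence $X_n=U_n^{1/\alpha}$, i.e.\ $U_n=X_n^\alpha$ there. Fix a small $z$ and a conditioning value $y$ with $y^\alpha>z^\beta$; the absolutely continuous part of the law of $(X_{n-1},X_n)$ from (b) charges a rectangle of such pairs inside $\{X_{n-1}^\alpha<X_n^\beta\}$. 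On this event $P(U_n\le z^\beta\mid X_n=y,X_{n-1}=x)=0$, whereas an elementary density computation (decomposing according to which of $U_n^{1/\alpha},U_{n-1}^{1/\beta}$ attains the max in $X_n$) gives $P(U_n\le z^\beta\mid X_n=y)=\tfrac{\beta}{\alpha+\beta}\, z^\beta/y^\alpha>0$, the discrepancy coming precisely from the positive chance --- invisible once $X_{n-1}$ is ignored --- that $X_n$ was attained at $U_{n-1}^{1/\beta}$, leaving $U_n$ strictly below $X_n^\alpha$. Since the two conditional probabilities disagree on a set of positive probability, the Markov property fails. The only real work is in justifying the reduction to $U_n\perp X_{n-1}\mid X_n$ and in the density bookkeeping for $P(U_n\le z^\beta\mid X_n=y)$; everything else is routine.
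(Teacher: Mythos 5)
Your parts (a) and (b) take the same route as the paper: rewrite each event $\{X_k\le x_k\}$ as an intersection of events about the underlying uniforms and factor by independence. One small slip in (b): your own (correct) intermediate step combines the two constraints on $U_{n-1}$ into $\{U_{n-1}\le\min(x_{n-1}^{\alpha},x_n^{\beta})\}$, so the joint c.d.f.\ is $x_n^{\alpha}x_{n-1}^{\beta}\min(x_{n-1}^{\alpha},x_n^{\beta})$; the final formula you quote, with $\min(x_n^{\alpha},x_{n-1}^{\beta})$, swaps the exponents. That swap is already present in the theorem statement and in the last display of the paper's own proof of (b) (it is inconsistent with the line preceding it, where $V^{1/\beta}\le x_n$ and $V^{1/\alpha}\le x_{n-1}$ give $V\le\min(x_n^{\beta},x_{n-1}^{\alpha})$), so you should have flagged it rather than reproduced it.

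For part (c) you genuinely diverge from the paper, and for the better. The paper argues non-Markovianity by comparing $P(X_3\le x_3\mid X_2\le x_2)$ with $P(X_3\le x_3\mid X_2\le x_2,X_1\le x_1)$ and asserting that the latter ``is a function of $x_1$.'' As you correctly point out, conditioning on lower sets is not the Markov property; worse, the paper's asserted dependence on $x_1$ is actually false: since $V$ is independent of $(W,Z)$ and the conditioning event factors as a product of events in $V$, $W$ and $Z$ separately, both conditional probabilities reduce to $z^{\alpha}\min(z^{\beta},y^{\alpha})/y^{\alpha}$, exactly as your three-point computation shows. So the paper's proof of non-Markovianity does not stand, while yours does: the reduction (via independence of $U_{n+1}$ from the past) of Markovianity to the conditional independence $U_n\perp X_{n-1}\mid X_n$, followed by the exhibition of a positive-probability region $\{X_{n-1}^{\alpha}<X_n^{\beta}\}$ on which $P(U_n\le z^{\beta}\mid X_n=y,X_{n-1}=x)=0$ while $P(U_n\le z^{\beta}\mid X_n=y)=\tfrac{\beta}{\alpha+\beta}z^{\beta}/y^{\alpha}>0$, is correct (I have checked both conditional computations) and is the argument the paper should contain. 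Your shift-invariance argument for stationarity fills in what the paper dismisses as trivial.
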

 
 \begin{proof}
 	To prove part $(a)$,  we introduce convenient new notation for $X_n$ and $X_{n-1}$ in the process defined in equation $(2.1)$ as follows
 	\begin{eqnarray*}
 		X_n &=&\max\{U^{1/\alpha},V^{1/\beta} \}\\
 		X_{n-1} &=&\max\{V^{1/\alpha},W^{1/\beta} \}\
 	\end{eqnarray*}
 	where $U,V$ and $W$ are i.i.d. $uniform(0,1)$ random variables. 
 	
 	\bigskip
 	Now, the distribution function of $\{ X_n \}$  is
 	\begin{eqnarray}
 		F(x_n) = P(X_n \leq x) &=& P \Big( \max\{U^{1/\alpha},V^{1/\beta} \} \leq x \Big)  \nonumber \\
 		&=& P\Big( U^{1/\alpha} \leq x, V^{1/\beta} \leq x \Big)  \nonumber \\
 		&=& x^{\alpha + \beta}.
 	\end{eqnarray}
 	
 	For the proof of part $(b)$ which identifies the joint distribtion function of  $X_n$ and $X_{n-1}$, we argue as follows.
 	\begin{eqnarray}
 		F_{X_n,X_{n-1}}(x_n,x_{n-1}) &=& P(X_n \leq x_n, X_{n-1} \leq x_{n-1})  \nonumber \\
 		&=& P \Big( \max\{U^{1/\alpha},V^{1/\beta} \} \leq x_n, \max\{V^{1/\alpha},W^{1/\beta} \} \leq x_{n-1} \Big) \nonumber  \\
 		&=& P \Big( U^{1/\alpha} \leq x_n, V^{1/\beta} \leq x_n, V^{1/\alpha} \leq x_{n-1}, W^{1/\beta} \leq x_{n-1}   \Big) \nonumber \\
 		&=& P\Big( U \leq x^\alpha_n, V \leq min\{x^{\alpha}_n, x_{n-1}^{\beta}\},  W \leq x_{n-1}^\beta \Big) \nonumber \\
 		&=& x^{\alpha}_n x^{\beta}_{n-1} \min\{x^{\alpha}_n, x_{n-1}^{\beta}\}.
 	\end{eqnarray}
 	It remains to prove Part $(c)$.  Stationarity is trivial from the definition of the process. In order to prove that the process is not Markovian, we will use the following notation.
 	\begin{eqnarray*}
 		X_3 &=&\max\{U^{1/\alpha},V^{1/\beta} \}\\
 		X_2 &=&\max\{V^{1/\alpha},W^{1/\beta} \}\\ 
 		X_1 &=& \max\{W^{1/\alpha},Z^{1/\beta} \}
 	\end{eqnarray*}
 	where $U,V, W$ and $Z$ are i.i.d. $uniform(0,1)$ random variables. Two relevant conditional distribution functions are:
 	\begin{eqnarray}
 		P(X_3 \leq x_3 | X_2 \leq x_2) &=& P\Bigg( \max \{ U^{1/\alpha}, V^{1/\beta}\} \leq x_3 \Big| \max\{V^{1/\alpha},W^{1/\beta}\} \leq x_2  \Bigg)  \nonumber\\
 		&=& P\Bigg( U \leq x_3^{\alpha}, V \leq x^{\beta}_3   \Big| \max\{V^{1/\alpha},W^{1/\beta}\} \leq x_2  \Bigg)  \nonumber  \\
 		&=& P\Big( U \leq x_3^{\alpha}\Big) P\Bigg( V \leq x^{\beta}_3 \Big| \max\{V^{1/\alpha},W^{1/\beta}\} \leq x_2  \Bigg) 
 	\end{eqnarray}
 	since $U$ is independent of $V$ and $W $,  and 
 	\begin{eqnarray}
 		P(X_3 \leq x_3 | X_2 \leq x_2, X_1  \leq x_1) &=& P\Bigg( \max \{ U^{1/\alpha}, V^{1/\beta}\} \leq x_3 \Big| \max\{V^{1/\alpha},W^{1/\beta}\} \leq x_2,
 		\nonumber 	\\ &&  \max\{W^{1/\alpha},Z^{1/\beta}\} \leq x_1   \Bigg) \nonumber \\
 		&=&  P \Bigg( U\leq x^{\alpha}_3,  V\leq x^{\beta}_3  \Big| \max\{V^{1/\alpha},W^{1/\beta}\} \leq x_2, 
 		\nonumber 	\\ &&  \max\{W^{1/\alpha},Z^{1/\beta}\} \leq x_1   \Bigg) \nonumber \\
 		&=&  P\Big( U\leq x^{\alpha}_3\Big) P\Bigg(  V\leq x^{\beta}_3  \Big|  \max\{V^{1/\alpha},W^{1/\beta}\} \leq x_2, 
 		\nonumber 	\\ &&  \max\{W^{1/\alpha},Z^{1/\beta}\} \leq x_1   \Bigg)
 	\end{eqnarray}
 	since $U$ is independent of $V$,$W$ and $Z $. It is evident that the expression in $(2.5)$ is a function of $x_1$ and is not equal to the expression in $(2.4)$ which shows that the process does not have the Markov property. 
 \end{proof}
 
 \begin{corollary}
 	If the sequence of random variables $\{X_n\}$ is as defined in equation $(2.1)$, then the one-dimensional density of the process is 
 	\begin{eqnarray}
 		f(x_n) = (\alpha + \beta) x^{(\alpha + \beta)-1};   \mbox{      } 0\leq x_n \leq 1.
 	\end{eqnarray}
 \end{corollary}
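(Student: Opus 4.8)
The plan is to obtain the density simply by differentiating the one-dimensional distribution function already established in part (a) of the preceding theorem. Recall that part (a), together with display (2.2), shows that for each $n$ the random variable $X_n$ has distribution function $F(x_n) = x_n^{\alpha+\beta}$ for $0 \le x_n \le 1$ (with $F(x_n)=0$ for $x_n<0$ and $F(x_n)=1$ for $x_n>1$). Since this distribution function is absolutely continuous, the density exists and is obtained as $f(x_n) = \frac{d}{dx_n} F(x_n)$ on the interior of the support.

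First I would invoke part (a) to write $F(x_n) = x_n^{\alpha+\beta}$ on $[0,1]$. Then I would differentiate: $\frac{d}{dx_n}\, x_n^{\alpha+\beta} = (\alpha+\beta)\, x_n^{(\alpha+\beta)-1}$, which holds for $0 < x_n \le 1$ (and one checks that assigning this value at the endpoints, or leaving the density undefined on the boundary null set, is immaterial). Outside $[0,1]$ the distribution function is constant, so the density is $0$ there. This yields exactly the claimed expression $f(x_n) = (\alpha+\beta)\, x_n^{(\alpha+\beta)-1}$ for $0 \le x_n \le 1$.

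There is essentially no obstacle here: the only point requiring a word of care is that $F$ is differentiable except possibly at the two boundary points $x_n=0$ and $x_n=1$, and that altering a density on a set of Lebesgue measure zero does not change the underlying distribution, so the stated formula on the closed interval $[0,1]$ is a legitimate version of the density. Consequently the corollary follows immediately from Theorem~1(a).
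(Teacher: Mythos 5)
Your proposal is correct and matches the paper's argument exactly: the paper likewise obtains the density by differentiating the distribution function $F(x_n)=x_n^{\alpha+\beta}$ established in Theorem~1(a). Your added remarks about the boundary points and null sets are careful but not essential; the substance is the same.
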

 \begin{proof}
 	The proof will follow by taking differentiation of part $(a)$ of Theorem 1. 
 \end{proof}
 
 \begin{corollary}
 	If the sequence of random variables $\{X_n\}$ is as defined in equation $(2.1)$, then the mean and variance are 
 	\begin{eqnarray}
 		E(X_n) &=&  \frac{\alpha + \beta}{\alpha + \beta + 1}.  \\
 		V(X_n) &=&  \frac{\alpha + \beta}{(\alpha + \beta +1)^2(\alpha + \beta + 2)}.
 	\end{eqnarray}
 \end{corollary}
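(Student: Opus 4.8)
The plan is to read off both formulas from the one-dimensional density supplied by Corollary 1, so no new probabilistic ideas are needed. Write $\gamma = \alpha + \beta$ for brevity; Corollary 1 gives $f(x_n) = \gamma\, x_n^{\gamma - 1}$ on $[0,1]$, so for any integer $k \ge 1$,
\begin{equation*}
E(X_n^k) = \int_0^1 x^k\, \gamma x^{\gamma-1}\, dx = \gamma \int_0^1 x^{\gamma+k-1}\, dx = \frac{\gamma}{\gamma+k}.
\end{equation*}
(Equivalently, one may observe that $X_n$ has the same distribution as $U^{1/\gamma}$ with $U\sim uniform(0,1)$ and compute $E(U^{k/\gamma})$ directly; either route is immediate.)

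Taking $k=1$ gives $E(X_n) = \gamma/(\gamma+1)$, which is the first asserted formula after restoring $\gamma = \alpha+\beta$. For the variance, take $k=2$ to get $E(X_n^2) = \gamma/(\gamma+2)$, and then combine via $V(X_n) = E(X_n^2) - \bigl(E(X_n)\bigr)^2$:
\begin{equation*}
V(X_n) = \frac{\gamma}{\gamma+2} - \frac{\gamma^2}{(\gamma+1)^2} = \frac{\gamma(\gamma+1)^2 - \gamma^2(\gamma+2)}{(\gamma+2)(\gamma+1)^2}.
\end{equation*}

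The only step that requires any care is the algebraic simplification of the numerator: expanding, $\gamma(\gamma+1)^2 - \gamma^2(\gamma+2) = \gamma\bigl[(\gamma^2+2\gamma+1) - (\gamma^2+2\gamma)\bigr] = \gamma$, so that $V(X_n) = \gamma/\bigl[(\gamma+1)^2(\gamma+2)\bigr]$, i.e.\ $V(X_n) = (\alpha+\beta)/\bigl[(\alpha+\beta+1)^2(\alpha+\beta+2)\bigr]$ as claimed. I do not expect a genuine obstacle here — the argument is a one-line integration plus a short cancellation; the single point worth verifying explicitly is that the quadratic and linear terms in $\gamma$ in the numerator cancel exactly, leaving just $\gamma$, which is precisely what makes the stated form of the variance so compact.
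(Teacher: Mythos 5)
Your proposal is correct and follows exactly the route the paper intends: the paper's proof simply states that these are trivial consequences of Corollary 1 (the density $f(x_n)=(\alpha+\beta)x^{\alpha+\beta-1}$), and you have just written out the standard moment integrals and the variance cancellation explicitly. Nothing further is needed.
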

 \begin{proof}
 	These are trivial consequences of Corrollary 1.
 \end{proof}

 We refer to  Figure \ref{fig3} for the bivariate distribution function plots for different values of $\alpha$ and $\beta$.
 \begin{figure} \label{fig3} [h!]
 	\begin{subfigure}{8cm}
 		\centering
 		\includegraphics[width=10cm]{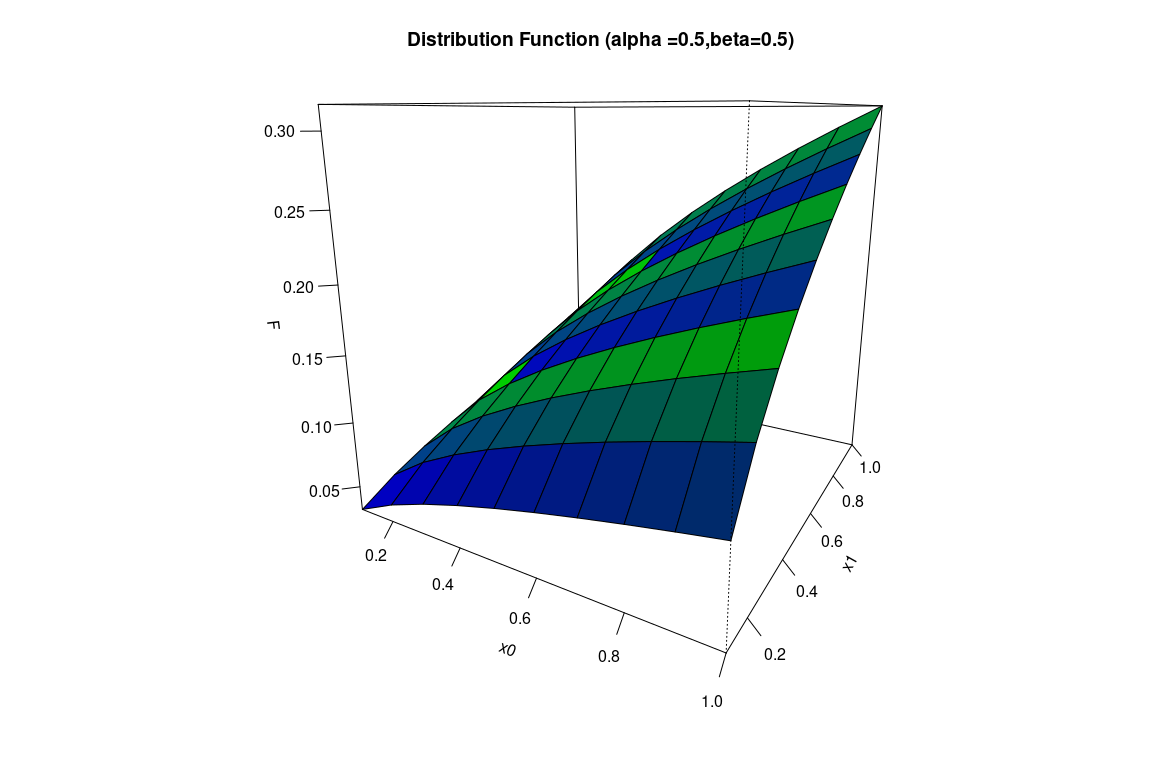}
 		\caption{$\alpha =0.5$ and $\beta =0.5$} 
 	\end{subfigure}
 	\begin{subfigure}{8cm}
 		\centering
 		\includegraphics[width=10cm]{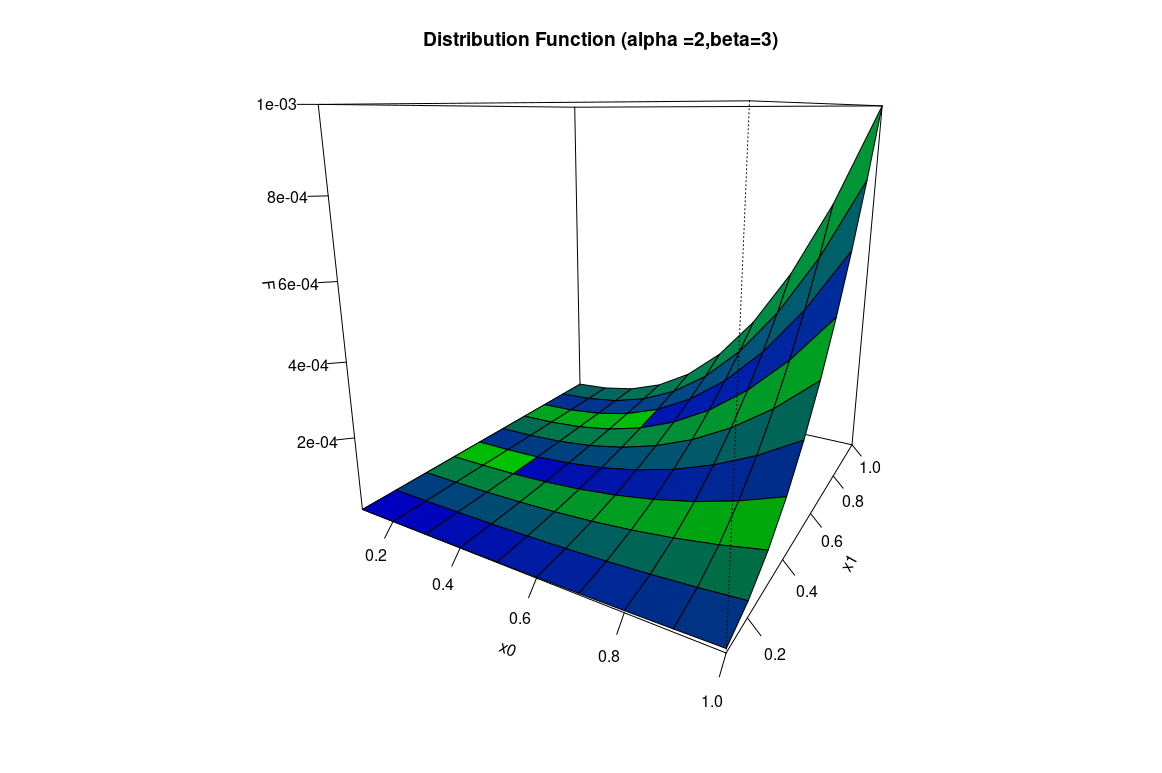}
 		\caption{$\alpha =2$ and $\beta =3$} 
 	\end{subfigure}
 	\caption{Distribution function}
 	\label{fig3}
 \end{figure}
 
 \begin{theorem}
 	If the sequence of random variables $\{X_n\}$ is as defined in equation (2.1), the auto-correlation function is 
 	\begin{eqnarray}
 		Corr(X_n,X_{n-1}) &=& \frac{Cov(X_n, X_{n-1})}{\sqrt{Var(X_n) Var(X_{n-1})}} \nonumber \\
 		&=& \frac{E(X_nX_{n-1})-\Big( \frac{\alpha +\beta}{\alpha + \beta +1 }\Big)^2}{\frac{\alpha +\beta}{(\alpha + \beta +1 )^2(\alpha+\beta +2)}}.
 	\end{eqnarray}
 \end{theorem}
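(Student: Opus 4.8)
The content of (2.9) splits into two parts. The displayed equality itself is immediate: by definition $Corr(X_n,X_{n-1})=Cov(X_n,X_{n-1})/\sqrt{Var(X_n)Var(X_{n-1})}$; part (c) of Theorem 1 (stationarity) gives $Var(X_n)=Var(X_{n-1})$; Corollary 2 supplies $E(X_n)=E(X_{n-1})=\frac{\alpha+\beta}{\alpha+\beta+1}$ and $Var(X_n)=\frac{\alpha+\beta}{(\alpha+\beta+1)^2(\alpha+\beta+2)}$; and $Cov(X_n,X_{n-1})=E(X_nX_{n-1})-E(X_n)E(X_{n-1})$. Substituting reproduces the right-hand side of (2.9) exactly. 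So the only real work --- and the thing that makes the formula usable --- is to evaluate the mixed moment $E(X_nX_{n-1})$.

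For that I would reuse the representation from the proof of Theorem 1: $X_n=\max\{U^{1/\alpha},V^{1/\beta}\}$ and $X_{n-1}=\max\{V^{1/\alpha},W^{1/\beta}\}$ with $U,V,W$ i.i.d.\ $uniform(0,1)$, and condition on $V$. Given $V=v$, the variable $X_n$ is a function of $U$ alone and $X_{n-1}$ a function of $W$ alone, hence conditionally independent, so
\[
E(X_nX_{n-1}\mid V=v)=E\big(\max\{U^{1/\alpha},v^{1/\beta}\}\big)\,E\big(\max\{v^{1/\alpha},W^{1/\beta}\}\big).
\]
Each factor is elementary: for a constant $c\in[0,1]$ and $\delta>0$, splitting on whether $U^{1/\delta}\leq c$ gives $E\big(\max\{U^{1/\delta},c\}\big)=c\,P(U\leq c^{\delta})+\int_{c^{\delta}}^{1}u^{1/\delta}\,du=\frac{\delta}{\delta+1}+\frac{1}{\delta+1}c^{\delta+1}$, so $E(X_n\mid V=v)=\frac{\alpha}{\alpha+1}+\frac{1}{\alpha+1}v^{(\alpha+1)/\beta}$ and $E(X_{n-1}\mid V=v)=\frac{\beta}{\beta+1}+\frac{1}{\beta+1}v^{(\beta+1)/\alpha}$. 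Multiplying these (each a constant plus a constant times a power of $v$) and integrating term by term over $(0,1)$, with $\int_0^1 v^{(\beta+1)/\alpha}\,dv=\frac{\alpha}{\alpha+\beta+1}$, $\int_0^1 v^{(\alpha+1)/\beta}\,dv=\frac{\beta}{\alpha+\beta+1}$ and $\int_0^1 v^{(\alpha+1)/\beta+(\beta+1)/\alpha}\,dv=\frac{\alpha\beta}{\alpha^2+\beta^2+\alpha\beta+\alpha+\beta}$, gives
\[
E(X_nX_{n-1})=\frac{1}{(\alpha+1)(\beta+1)}\left[\alpha\beta+\frac{\alpha^2+\beta^2}{\alpha+\beta+1}+\frac{\alpha\beta}{\alpha^2+\beta^2+\alpha\beta+\alpha+\beta}\right].
\]
Inserting this into (2.9) and clearing denominators yields $Corr(X_n,X_{n-1})$ as an explicit rational function of $\alpha$ and $\beta$.

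An alternative to the conditioning step is to differentiate the joint distribution function from part (b) of Theorem 1, obtain the joint density, and integrate $x_nx_{n-1}$ against it; this is a bit more delicate because one must split the unit square along the curve on which the two arguments of the $\min$ agree, and because the joint law of $(X_n,X_{n-1})$ has a singular component concentrated on that curve (corresponding to the positive-probability event on which both coordinates are pinned to the shared innovation $V$). The conditioning route avoids this. In neither approach is there a conceptual obstacle; the only place that needs care is the bookkeeping at the end --- collecting the four power integrals over a common denominator and then simplifying the quotient by $Var(X_n)$ in (2.9).
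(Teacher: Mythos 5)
Your proposal is correct, and it takes a genuinely different --- and in fact cleaner --- route than the paper's. You rightly note that the displayed identity in the theorem is immediate from the definition of correlation together with stationarity and Corollary 2, so the real content is the mixed moment $E(X_nX_{n-1})$. The paper evaluates this by partitioning the sample space into the four events determined by which argument attains each maximum and computing four separate integrals $A,B,C,D$; you instead condition on the shared innovation $V$, exploit the conditional independence of $X_n$ and $X_{n-1}$ given $V$, and multiply two elementary one-dimensional expectations before integrating over $v$. Your route buys two things. First, it yields the compact closed form
\[
E(X_nX_{n-1})=\frac{1}{(\alpha+1)(\beta+1)}\left[\alpha\beta+\frac{\alpha^2+\beta^2}{\alpha+\beta+1}+\frac{\alpha\beta}{\alpha^2+\beta^2+\alpha\beta+\alpha+\beta}\right],
\]
whereas the paper stops with the remark that $A+B+C+D$ ``does not appear to simplify.'' Second, it sidesteps an error in the paper's term $D$: there the contribution of $X_{n-1}$ on the event $\{W<V^{\beta/\alpha}\}$ is written as $V^{1/\beta}$, but the definition of the process gives $X_{n-1}=V^{1/\alpha}$ on that event, so the paper's $A+B+C+D$ agrees with $E(X_nX_{n-1})$ only when $\alpha=\beta$. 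A spot check at $\alpha=2$, $\beta=1$ confirms this: direct integration and your formula both give $E(X_nX_{n-1})=69/120=0.575$, while the paper's four terms sum to $1/40+1/5+3/20+8/55\approx 0.520$. Your parenthetical remark about the singular component of the joint law (concentrated on the curve where both coordinates are powers of the shared $V$) is also correct and explains why the density-based alternative requires extra care.
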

 \begin{proof}
 	Indeed, it suffices to derive only $E(X_nX_{n-1})$ in order to compute auto-correlation function of the process.   For  $E(X_n X_{n-1})$ we argue as follows: 
 	\begin{eqnarray}
 		E(X_nX_{n-1})&=&E(V^{1/\beta}W^{1/\beta} I(U<V^{\alpha/\beta},W>V^{\beta/\alpha} ) ) \nonumber\\
 		& &+E(V^{1/\beta}V^{1/\alpha} I(U<V^{\alpha/\beta},W<V^{\beta/\alpha})) \nonumber \\
 		& &+E(U^{1/\alpha}W^{1/\beta} I(U>V^{\alpha/\beta},W>V^{\beta/\alpha}) )\nonumber \\
 		& &+E(U^{1/\alpha}V^{1/\beta} I(U>V^{\alpha/\beta},W<V^{\beta/\alpha}) ) \nonumber\\
 		& & =A+B+C+D.
 	\end{eqnarray}
 	
 	For term A, we have
 	\begin{eqnarray}
 		A &=&\int_0^1  dv\int_0^{v^{\alpha/\beta}} du\int_{v^{\beta/\alpha}}^1  dw \ [v^{1/\beta} w^{1/\beta} ]  \nonumber\\
 		&=& \int_0^1  dv\int_{v^{\beta/\alpha}}^1  dw \ [v^{\alpha/\beta}v^{1/\beta} w^{1/\beta} ] \nonumber  \\
 		&=& \frac{\beta^2}{\beta +1} \Bigg[ \frac{1}{\alpha + \beta +1} - \frac{\alpha}{\beta^2 + \alpha^2 +  \alpha \beta + \alpha +\beta}\Bigg].
 	\end{eqnarray}
 	
 	For term B, we have
 	\begin{eqnarray}
 		B &=&\int_0^1 dv\int_0^{v^{\alpha/\beta}} du \int^{v^{\beta/\alpha}}_0 dw \  [v^{1/\beta}  v^{1/\alpha}] \nonumber \\
 		&=&\int_0^1 dv \int^{v^{\beta/\alpha}}_0 dw \  [v^{\alpha/\beta}v^{1/\beta}  v^{1/\alpha}] \nonumber \\
 		&=&\int_0^1 dv  \  [v^{\beta/\alpha}v^{\alpha/\beta}v^{1/\beta}  v^{1/\alpha}] \nonumber \\
 		&=&\int_0^1v^{(\alpha+\beta+\alpha^2+\beta^2)/(\alpha\beta)} \ dv \nonumber \\
 		&=&\left[\frac{\alpha+\beta+\alpha^2+\beta^2+\alpha\beta}{\alpha\beta}\right]^{-1}.
 	\end{eqnarray}
 	
 	For term C, we have,
 	\begin{eqnarray*}
 		C &=&\int_0^1 dv\int^1_{v^{\alpha/\beta}} du\int_{v^{\beta/\alpha}}^1 dw \ [u^{1/\alpha} w^{1/\beta} ] \nonumber \\
 		&=& \frac{\beta \alpha}{(\beta +1)(\alpha +1)} \Bigg[ 1 - \frac{ \beta}{\alpha + \beta + 1 } - \frac{ \alpha}{\alpha + \beta + 1 }
 		+ \frac{\alpha \beta}{\alpha^2 + \beta^2 +  \alpha \beta + \alpha + \beta} \Bigg].
 	\end{eqnarray*}

 	For term D, we have
 	\begin{eqnarray}
 		D &=&\int_0^1 dv \int^1_{v^{\alpha/\beta}} du \  [v^{\beta/\alpha}u^{1/\alpha} v^{1/\beta}] \nonumber \\
 		&=& \frac{\alpha^2 \beta}{\alpha +1} \Bigg[ \frac{1}{\beta^2 + \alpha + \alpha \beta} - \frac{1}{\alpha^2 + \beta^2  + 2 \alpha + \alpha \beta}\Bigg]. 
 	\end{eqnarray}
 	Unfortunately, the resulting expression for $A+B+C+D$ is complicated and does not appear to simplify. 
 \end{proof}

 We refer to Figure \ref{fig4}  for the auto-correlation plot for different values of $\alpha$ and $\beta$.  The Figure \ref{fig5} refer to the auto-correlation plot for fixed value of $\alpha$ against varying $\beta$ and vice versa.  
 \begin{figure}[h!] 
 	\centering
 	\includegraphics[width=1.2\linewidth]{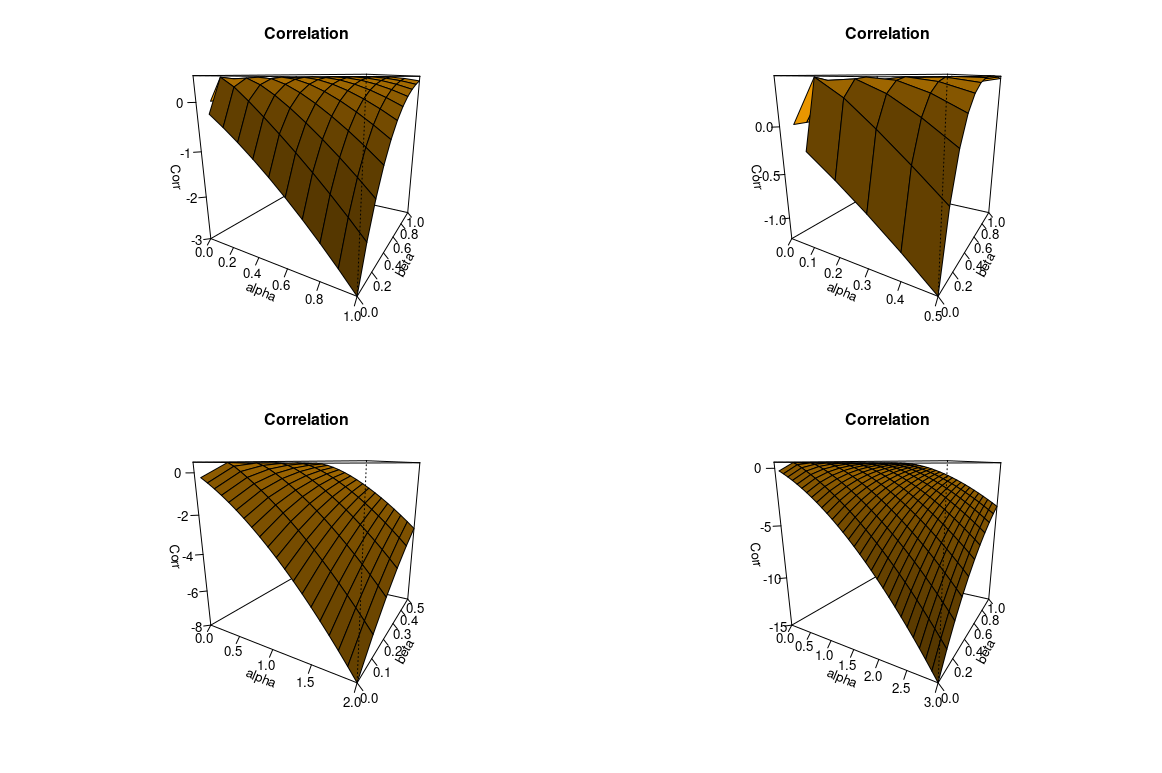} 
 	\caption{Correlation plots for $(\alpha,\beta) \in \{(1,1), (0.5,1),(2,5),(3,1)\}$ starting from $(0,0)$} 
 	\label{fig4} 
 \end{figure}

 \begin{figure} 
 	\begin{subfigure}{9cm}
 		\centering
 		\includegraphics[width=7cm]{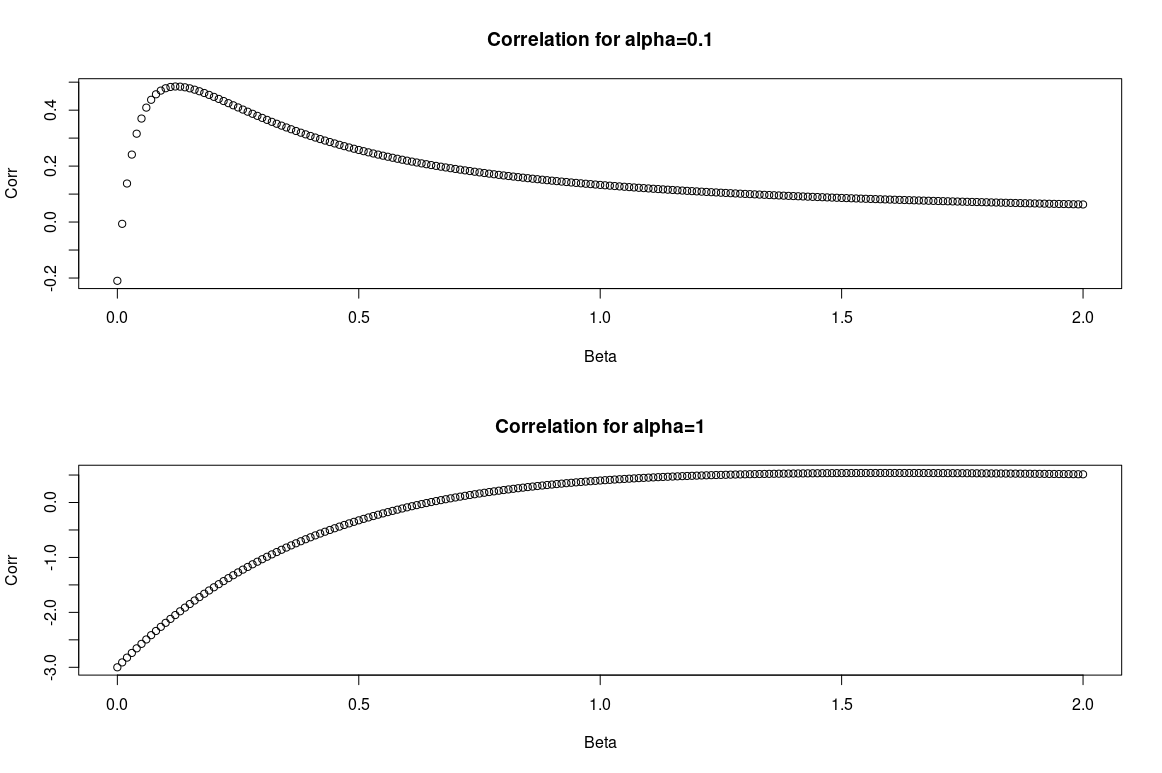}
 		\caption{Correlation plots for $\alpha =0.1,1$ and $\beta \in (0,2)$} 
 	\end{subfigure}
 	\begin{subfigure}{9cm}
 		\centering
 		\includegraphics[width=7cm]{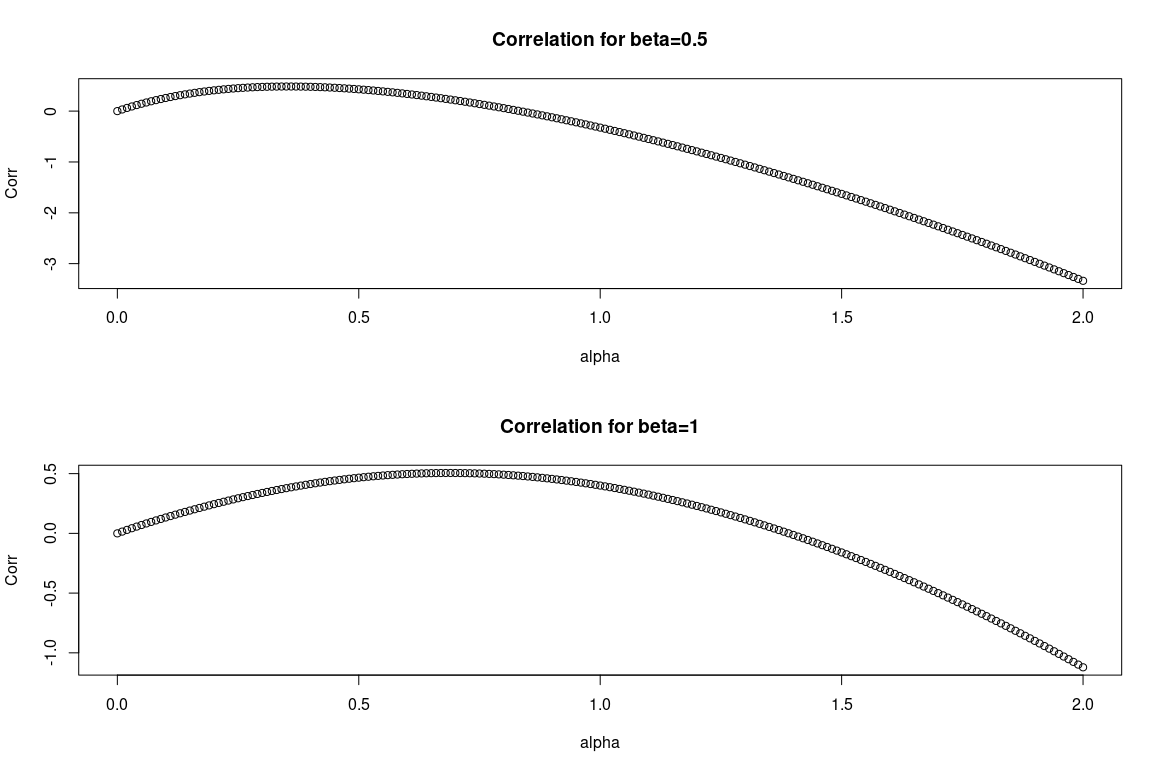}
 		\caption{Correlation plots for $\beta =0.1,1$ and $\alpha \in (0,2)$} 
 	\end{subfigure}
 	\caption{Correlation plots}
 	\label{fig5}
 \end{figure}

 \begin{theorem}
 	If the sequence of random variables $\{X_n\}$ is as defined in equation (2.1), then 
 	\begin{eqnarray}
 		P(X_1<X_2) = \left\{
 		\begin{array}{ll}
 			\frac{\alpha}{2\alpha+\beta} & \mbox{if } \alpha > \beta \\
 			\frac{\beta+\alpha}{2\beta+\alpha} & \mbox{if } \alpha \leq \beta. 
 		\end{array}
 		\right.
 	\end{eqnarray}
 \end{theorem}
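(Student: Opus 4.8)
The plan is to condition on the single $uniform(0,1)$ variable that $X_1$ and $X_2$ share. As in the proof of Theorem 1, write $X_1=\max\{W^{1/\alpha},Z^{1/\beta}\}$ and $X_2=\max\{V^{1/\alpha},W^{1/\beta}\}$ with $W,Z,V$ i.i.d.\ $uniform(0,1)$; here $W=U_1$ is common to the two coordinates, while $Z=U_0$ and $V=U_2$ each appear in only one. Conditioning on $W=w$ makes $X_1$ and $X_2$ independent, with $X_1=\max\{w^{1/\alpha},Z^{1/\beta}\}$ and $X_2=\max\{w^{1/\beta},V^{1/\alpha}\}$, where $Z^{1/\beta}$ and $V^{1/\alpha}$ are independent with distribution functions $t^{\beta}$ and $t^{\alpha}$ on $[0,1]$; in particular $X_1$ has an atom at the ``floor'' $w^{1/\alpha}$ and $X_2$ an atom at the floor $w^{1/\beta}$. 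It then suffices to compute $P(X_1<X_2\mid W=w)$ and integrate: $P(X_1<X_2)=\int_0^1 P(X_1<X_2\mid W=w)\,dw$.

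The dichotomy in the statement is forced by the elementary fact that, for $w\in(0,1)$, $w^{1/\alpha}>w^{1/\beta}$ exactly when $\alpha>\beta$ (since $x\mapsto w^{x}$ is decreasing). Assume first $\alpha>\beta$. Then $X_1\ge w^{1/\alpha}>w^{1/\beta}$, so on $\{X_1<X_2\}$ we must have $X_2>w^{1/\alpha}$, which forces $X_2=V^{1/\alpha}$; hence $\{X_1<X_2\}=\{V^{1/\alpha}>w^{1/\alpha}\}\cap\{V^{1/\alpha}>Z^{1/\beta}\}$, and integrating out $V$ and $Z$ gives
\[
P(X_1<X_2\mid W=w)=\int_{w^{1/\alpha}}^{1}s^{\beta}\,\alpha s^{\alpha-1}\,ds=\frac{\alpha}{\alpha+\beta}\Bigl(1-w^{(\alpha+\beta)/\alpha}\Bigr).
\]
Integrating over $w\in(0,1)$ yields $P(X_1<X_2)=\frac{\alpha}{2\alpha+\beta}$. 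When $\alpha<\beta$ the two floors exchange roles: now $X_2\ge w^{1/\beta}>w^{1/\alpha}$, and conditioning on $V^{1/\alpha}$ (according to whether it lies below or above $w^{1/\beta}$) together with the identity ``$\max\{w^{1/\alpha},Z^{1/\beta}\}<c\iff Z^{1/\beta}<c$ whenever $c>w^{1/\alpha}$'' gives, after a short computation, $P(X_1<X_2\mid W=w)=\frac{\alpha}{\alpha+\beta}+\frac{\beta}{\alpha+\beta}\,w^{(\alpha+\beta)/\beta}$; integrating over $(0,1)$ produces $\frac{\alpha+\beta}{\alpha+2\beta}$, the second expression.

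The step I expect to be the main obstacle is the careful accounting of the atoms of $X_1$ and $X_2$, so that the \emph{strict} inequality $X_1<X_2$ is respected; this is exactly what makes the problem split cleanly once one knows which floor dominates. The borderline value $\alpha=\beta$ has to be treated on its own, since then the floors coincide and neither maximum can be pinned to its innovation term. In that case $X_1=\max\{p,q\}$ and $X_2=\max\{p,r\}$ with $p=W^{1/\alpha}$, $q=Z^{1/\alpha}$, $r=V^{1/\alpha}$ i.i.d.\ continuous, so $\{X_1<X_2\}$ is simply the event that $r$ is the largest of the three, whence $P(X_1<X_2)=\tfrac13$; since $\frac{\alpha}{2\alpha+\beta}=\tfrac13$ at $\alpha=\beta$, the borderline case is covered by the first expression (so the condition ``$\alpha\le\beta$'' in the statement is really ``$\alpha<\beta$'').
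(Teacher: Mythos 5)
Your proof is correct, and its core mechanism is the same as the paper's: condition on the shared uniform $U_1=w$, note that for the strict inequality to hold the larger of the two must be realized by its own innovation term, and integrate. Two points of difference are worth recording. First, for $\alpha<\beta$ you compute $P(X_1<X_2\mid W=w)$ directly by splitting on whether $V^{1/\alpha}$ exceeds the floor $w^{1/\beta}$, whereas the paper computes $P(X_1>X_2)$ by interchanging the roles of $\alpha$ and $\beta$ and then complements; your route avoids relying on $P(X_1=X_2)=0$, which is exactly the hypothesis that fails on the boundary. Second, and more substantively, your explicit treatment of $\alpha=\beta$ is a genuine correction to the statement as printed: when $\alpha=\beta$ one has $X_1<X_2$ iff $U_2$ is the strict maximum of $U_0,U_1,U_2$, so $P(X_1<X_2)=\tfrac13=\frac{\alpha}{2\alpha+\beta}$, while the second displayed formula gives $\tfrac23$; the discrepancy is accounted for by the atom $P(X_1=X_2)=P(U_1=\max)=\tfrac13$ at $\alpha=\beta$. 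So the case $\alpha=\beta$ belongs with the first branch, not the second, and the paper's complementation argument silently assumes $\alpha<\beta$ strictly. Your bookkeeping of the atoms at the floors $w^{1/\alpha}$ and $w^{1/\beta}$, and the resulting one-sided limits $\tfrac13$ and $\tfrac23$ as $\alpha\to\beta^{\pm}$, is exactly the right way to see why the formula is genuinely discontinuous there.
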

 \begin{proof}
 	
 	By definition of the process  we have
 	$ X_1=\max \{U_1^{1/\alpha},U_0^{1/\beta}\}$ and $ X_2=\max \{U_2^{1/\alpha},U_1^{1/\beta}\}$
 	
 	from equation $(2.1)$,  where $U_0,U_1$ and $U_2$ are i.i,d $uniform(0,1)$ variables.  Then, 
 	\begin{eqnarray}
 		P(X_1<X_2)&=&P(\max\{U_1^{1/\alpha},U_0^{1/\beta}\}<\max\{U_2^{1/\alpha},U_1^{1/\beta}\}) \nonumber	\\
 		&=&\int_0^1P(\max\{u_1^{1/\alpha},U_0^{1/\beta}\}<\max\{U_2^{1/\alpha},u_1^{1/\beta}\}) \  du_1.
 	\end{eqnarray}
 	So if, $\alpha>\beta$  then 
 	\begin{eqnarray}	
 		P(X_1<X_2)	&=&\int_0^1 P(u_1^{1/\alpha}<U_2^{1/\alpha},U_0^{1/\beta}<U_2^{1/\alpha}) \ du_1 \nonumber
 		\\
 		&=&\int_0^1\int_0^1 P(u_1^{1/\alpha}<U_2^{1/\alpha},u_0^{1/\beta}<U_2^{1/\alpha}) \ du_1 \ du_0
 		\nonumber
 		\\
 		&=&\int_0^1\int_0^1 P(u_1<U_2,u_0^{\alpha/\beta}<U_2) \ du_1 \ du_0
 		\nonumber
 		\\&=&\int_0^1\int_0^1 (1-\max\{u_1,u_0^{\alpha/\beta}\}) \ du_1 \ du_0\nonumber
 		\\&=&1-\int_0^1\int_0^1\max\{u_1,u_0^{\alpha/\beta}\} \ du_1 \ du_0.
 		\nonumber \\
 	\end{eqnarray}
 	Thus we have
 	
 	\begin{eqnarray}
 		P(X_1<X_2)	&=&1-\int_0^1\int_0^{u_0^{\alpha/\beta}} u_0^{\alpha/\beta}  \ du_1 \ du_0  -\int_0^1\int_{u_0^{\alpha/\beta}}^1 u_1 \ du_1 \ du_0 \nonumber
 		\\ 
 		&=& 1-\int_0^1 u_0^{(2\alpha/\beta)} \ du_0 -(1/2)\int_0^1 [1-u_0^{(2\alpha/\beta)}] \ du_0 \nonumber
 		\\
 		&=& 1-\frac{1}{(2\alpha/\beta)+1}- (1/2)+(1/2)\int_0^1 u_0^{(2\alpha/\beta)} \ du_0 \nonumber
 		\\
 		&=& 1- \frac{\beta}{2 \alpha + \beta} -\frac{1}{2}  + \frac{1}{2} \frac{\beta}{2 \alpha + \beta}\nonumber
 		\\
 		&=& \frac{\alpha}{2\alpha+\beta}.
 	\end{eqnarray}

 	\bigskip
 	We can also consider the case in which $\alpha<\beta$.  By evaluating $P(X_1>X_2)$ when $\alpha<\beta$ we find by a parallel argument (just interchanging the roles of $\alpha$ and $\beta$)  that
 	$$ P(X_1>X_2)=\frac{\beta}{2\beta+\alpha}.$$
 	
 	Consequently, if $\alpha<\beta$ we have $P(X_1<X_2)=\frac{\beta+\alpha}{2\beta+\alpha}.$ 
 \end{proof}
 \bigskip
 
 Finally, we make a note that the general PRH process is defined in terms of a general distribution function $F_0(x)$ with quantile function $F_0^{-1}(u)$ and support $(0,\infty)$. Denoting the PRH process by $ \left\{ Y_n \right\} $  we then have for each $n$:
 \begin{eqnarray} \label{PFD-PRH}
 	Y_n=F_0^{-1}(X_n)\\
 	X_n=F_0(Y_n).
 \end{eqnarray}
 
 If the index set of the process, the set of values of $n$ for which it is defined, consists of the positive and negative integers then the processes will be stationary. If the index set is $0,1,2..$ then it will be necessary to define $U_{-1}$ to have a uniform distribution in order for $X_0$ to be well-defined.  With that modifation the process will be stationary, despite the claim in Kundu \cite{kd22}, the processes will not be Markovian (see above part $(c)$ of Theorem 1).
 
 \bigskip 
 \begin{remark}
 	
 	Note that if $X\sim PFD(\alpha)$ then $Y=1/X \sim Pareto(\alpha,1)$. So we can recast all the discussion in this paper in terms of Pareto processes.
 \end{remark}
 
 \section{Notes on other examples}
 A key property of power function distributions is the following.  If $V_i, \ \ i=1,2$ are independent with
 $V_ i\sim PFD(\alpha_i), \ \  i=1,2$ then $W= \max\{V_1,V_2\} \sim PFD(\alpha_1+\alpha_2).$ This property of the PF distribution was crucial in the construction of the Kundu PFD process.
 
 \bigskip
 
 We may make use of this observation to define a stationary power function$(\alpha)$ process as follows:
 Define $X_0=U_0^{1/\alpha} \sim PF(\alpha)$ and for $n=1,2,...$ define
 
 \begin{equation}\label{ARPFProc}
 	X_n= \max \{X_{n-1}^{\alpha/(\alpha-\delta)},U_n^{1/\delta}\},
 \end{equation}
 where $\delta \in (0,\alpha)$ and the $U_n$'s are i.i.d. $uniform(0,1)$ random variables.  It is not difficult to verify that this process is completely stationary with $X_n \stackrel{d}{=}U^{1/\alpha} \sim PFD(\alpha)$ for every $n$.   It is clearly a Markov process. Even if $X_0$ does not have a $PF(\alpha)$ distribution, the process is still Markovian though no longer stationary. It will, however, have a $PF(\alpha)$ long run distribuition.  We refer to Figures \ref{fig6}--\ref{fig7} for examples of simulated sample paths of steps $n=10$ and $n=100$.
 
 \begin{figure} [h!]
 	\begin{subfigure}{9cm}
 		\centering
 		\includegraphics[width=7cm]{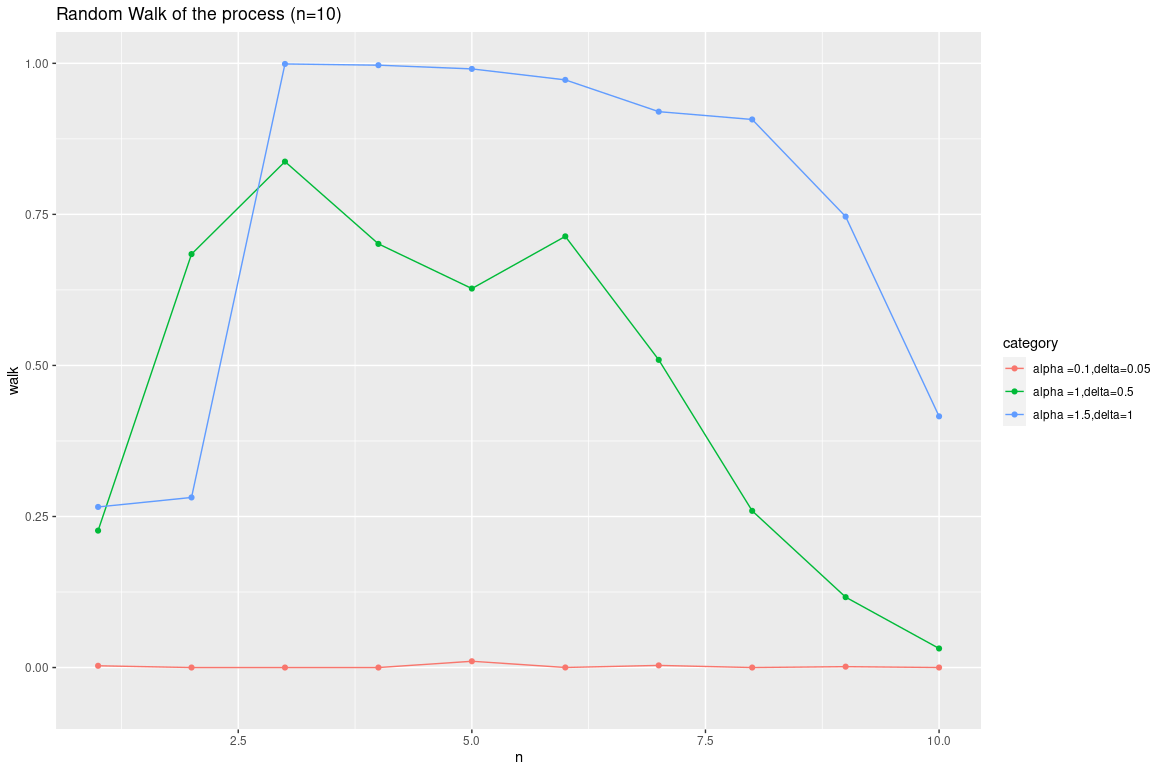}
 		\caption{$n=10$}  
 	\end{subfigure}
 	\begin{subfigure}{9cm}
 		\centering
 		\includegraphics[width=7cm]{walk101.png}
 		\caption{$n=10$} 
 	\end{subfigure}
 	\caption{Sample Path with $10$ steps}
 	\label{fig6}
 \end{figure}

 \begin{figure} 
 	\begin{subfigure}{9cm}
 		\centering
 		\includegraphics[width=7cm]{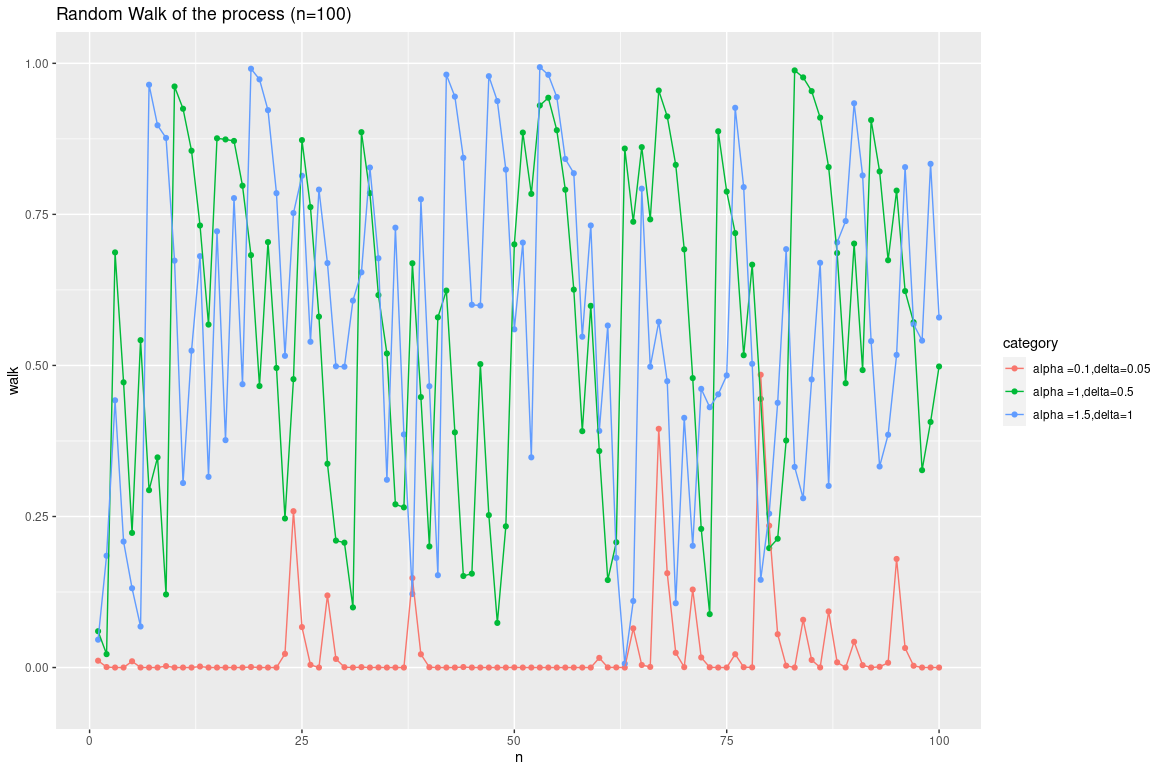}
 		\caption{$n=100$}  
 	\end{subfigure}
 	\begin{subfigure}{9cm}
 		\centering
 		\includegraphics[width=7cm]{walk1001.png}
 		\caption{$n=100$} 
 	\end{subfigure}
 	\caption{Sample Path with $100$ steps}
 	\label{fig7}
 \end{figure}
 
 Processes of the kind $X_n = c \Big( \min \{X_{n-1}, Y_n \} \Big)$, where $c > 1$ be a positive constant,  $X_n$ and $Y_n$ are random variables having some  specific distributions are called minification processes and appear frequently in the literature, for examples see Arnold and Hallet \cite{ah89} and Arnold \cite{a01} and the references in those papers.
 
 In the following subsection we derive some stochastic properties of the process defined in $(3.1)$.    
 \subsection{Stochastic properties of the power function process defined in $(3.1)$}   
 
 \begin{theorem}
 	If the sequence of random variables $\{ X_n\}$ is as defined in equation $(3.1)$, the following statements are true:
 	\begin{enumerate} [label=(\alph*)]
 		\item $\{X_n\}$ is stationary Markov process. 
 		\item $\{X_n\}$ has a PFD($\alpha$) distribution for each $n$..
 		\item the joint distribution function of $(X_{n-1},X_n)$  is $x_1^\delta \min \{x_0^\alpha, x_1^{\alpha-\delta}\}, \ \  0 \leq x_0,x_1 \leq 1$.
 	\end{enumerate}
 \end{theorem}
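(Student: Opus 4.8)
The plan is to treat the three parts in the order (a), (b), (c), with part (b) carried out by induction on $n$ and parts (a) and (c) following quickly once the one-dimensional law is in hand.

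For part (a), the Markov property is immediate from the construction: $X_n$ is a measurable function of $X_{n-1}$ and $U_n$ alone, and $U_n$ is independent of $(U_0,U_1,\dots,U_{n-1})$, hence independent of $(X_0,\dots,X_{n-1})$; so $X_n$ depends on the past only through $X_{n-1}$, and the one-step transition rule does not depend on $n$. Because the chain is time-homogeneous, stationarity of the whole process reduces to stationarity of the marginal distribution, i.e. to the statement $X_n\stackrel{d}{=}X_0$ for all $n$. Thus the substance of part (a) is subsumed by part (b).

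For part (b), the base case is immediate: $X_0=U_0^{1/\alpha}$ satisfies $P(X_0\le x)=x^{\alpha}$ on $[0,1]$, so $X_0\sim\mathrm{PFD}(\alpha)$. For the inductive step, assume $X_{n-1}\sim\mathrm{PFD}(\alpha)$. A one-line change of variables gives $P\big(X_{n-1}^{\alpha/(\alpha-\delta)}\le x\big)=P\big(X_{n-1}\le x^{(\alpha-\delta)/\alpha}\big)=x^{\alpha-\delta}$, so $X_{n-1}^{\alpha/(\alpha-\delta)}\sim\mathrm{PFD}(\alpha-\delta)$ (here $\delta\in(0,\alpha)$ is exactly what is needed for the exponent $\alpha/(\alpha-\delta)$ to be a well-defined positive number and for $\alpha-\delta>0$); similarly $U_n^{1/\delta}\sim\mathrm{PFD}(\delta)$. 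These two variables are independent, so the max-stability property of power function distributions recalled at the start of Section 3 yields $X_n=\max\{X_{n-1}^{\alpha/(\alpha-\delta)},U_n^{1/\delta}\}\sim\mathrm{PFD}\big((\alpha-\delta)+\delta\big)=\mathrm{PFD}(\alpha)$, completing the induction. Part (a) follows.

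For part (c), write the event $\{X_n\le x_1\}$ as the intersection of $\{X_{n-1}\le x_1^{(\alpha-\delta)/\alpha}\}$ and $\{U_n\le x_1^{\delta}\}$. Using independence of $U_n$ and $X_{n-1}$,
$$F_{X_{n-1},X_n}(x_0,x_1)=P\big(X_{n-1}\le\min\{x_0,\,x_1^{(\alpha-\delta)/\alpha}\}\big)\,P\big(U_n\le x_1^{\delta}\big).$$
Evaluating the first factor with the $\mathrm{PFD}(\alpha)$ law from part (b), evaluating the second with the uniform law, and using that $t\mapsto t^{\alpha}$ is increasing (so $(\min\{a,b\})^{\alpha}=\min\{a^{\alpha},b^{\alpha}\}$) gives $x_1^{\delta}\min\{x_0^{\alpha},x_1^{\alpha-\delta}\}$ on $0\le x_0,x_1\le 1$. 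There is no real obstacle in this theorem; the only points demanding care are the bookkeeping of the fractional exponents and the explicit check that $U_n$ is independent of the \emph{entire} past $(X_0,\dots,X_{n-1})$, since that independence is what legitimizes both the Markov claim in (a) and the factorization used in (c).
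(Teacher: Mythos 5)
Your proof is correct and follows essentially the same route as the paper: induction on the marginal law using the max-stability of independent power function variables, and a factorization of the joint distribution function via independence of the innovation $U_n$ from the past. The only differences are cosmetic improvements---you actually supply the inductive step and the Markov/stationarity justification that the paper dismisses as trivial, and in part (c) you work directly with $(X_{n-1},X_n)$ using the $\mathrm{PFD}(\alpha)$ marginal from part (b), whereas the paper reduces to $(X_0,X_1)$ by stationarity and computes with the explicit representation in terms of $U_0$ and $U_1$.
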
      
 
 \begin{proof}
 	Part $(a)$ is trivial. For part $(b)$ of the theorem, we prove it using mathematical induction. Now, consider
 	the distribution function of $\{ X_n\}$ is  
 	\begin{eqnarray}
 		F_n(x_n)=P(X_n \leq x) &=& P\Big( \max\Bigl\{ X_{n-1}^{\frac{\alpha}{\alpha-\delta}}, U^{\frac{1}{\delta}}_n \Bigr\} \leq x \Big) \nonumber \\
 		&=& P\Big( X_{n-1}^{\frac{\alpha}{\alpha-\delta}} \leq x, U^{\frac{1}{\delta}}_n \leq x \Big) \nonumber \\
 		&=& P\Big( X_{n-1}^{\frac{\alpha}{\alpha-\delta}} \leq x \Big)  P\Big( U^{\frac{1}{\delta}}_n \leq x \Big) \nonumber \\
 		&=& P \Big(X_{n-1} \leq x^{\frac{\alpha -\delta}{\alpha}} \Big)  P\Big(U_n \leq x^\delta\Big) \nonumber \\
 		&=& x^\delta P \Big(X_{n-1} \leq x^{\frac{\alpha -\delta}{\alpha}} \Big)
 	\end{eqnarray} 
 	
 	Now, for $n=1$ the distribution function will be
 	\begin{eqnarray}
 		F_1(x) = P(X_1 \leq x)  = x^\delta P\Big(X_0 \leq x^{\frac{\alpha -\delta}{\alpha}} \Big) = x^\delta P\Big( U^{\frac{1}{\alpha}}_0 \leq x^{\frac{\alpha -\delta}{\alpha}}\Big) = x^{\alpha}.
 	\end{eqnarray}
 	For $n=2$
 	\begin{eqnarray}
 		F_2(x) &=& x^\delta P \Big(X_{1} \leq x^{\frac{\alpha -\delta}{\alpha}} \Big) = x^\delta P\Big( \max\Bigl\{ X_{0}^{\frac{\alpha}{\alpha-\delta}}, U^{\frac{1}{\delta}}_1 \Bigr\} \leq x^{\frac{\alpha -\delta}{\alpha}} \Big) \nonumber \\
 		&=&  x^\delta P \Big( U^{\frac{1}{\alpha -\delta}}_0 \leq x^{\frac{\alpha -\delta}{\alpha}} \Big) P \Big( U^{\frac{1}{\delta}}_1 \leq x^{\frac{\alpha -\delta}{\alpha}} \Big) \nonumber \\
 		&=&  x^{\alpha + \frac{(\alpha -\delta)^2}{\alpha} + \frac{\delta(\alpha - \delta)}{\alpha}} \nonumber \\
 		&=& x^{\alpha}.
 	\end{eqnarray}
 	
 	By mathematical indiuction at $n=m$th step the distribution function of $X_m$ is
 	
 	\begin{eqnarray}
 		F_m(x)= P(X_m \leq x) &=&  x^{\alpha}.
 	\end{eqnarray}
 	Hence, for any $n$ the marginal distribution function of $\{ X_n\}$ follows PFP($\alpha$) is 
 	\begin{eqnarray}
 		F_n(x)= P(X_n \leq x) &=&  x^{\alpha}.
 	\end{eqnarray}

 	Now, for part $(c)$ of the theorem, the joint distribution of $X_n$ and $X_{n+1}$ is, by stationarity of the process,  the same as the joint distribution of $X_0$ and $X_1$ which may be computed as follows
 	\begin{eqnarray}
 		F_{X_0,X_1}(x_0,x_1) &=& P(X_0 \leq x_0, X_{1} \leq x_1)  \nonumber \\
 		&=& P(U_0^{1/\alpha}\leq x_0,U_0^{1/(\alpha-\delta)}\leq x_1,U_1^{1/\delta}\leq x_1) \nonumber \\
 		&=& P(U_0 \leq x_0^\alpha,U_0 \leq x_1^{\alpha-\delta},U_1 \leq x_1^\delta) \nonumber  \\
 		&=&P(U_0 \leq \min\{x_0^\alpha, x_1^{\alpha-\delta}\},U_1 \leq x_1^\delta)\}  \nonumber \\
 		&=&x_1^\delta \min \{x_0^\alpha, x_1^{\alpha-\delta}\}, \ \  0 \leq x_0,x_1 \leq 1.
 	\end{eqnarray}
 \end{proof}

 \begin{corollary} 
 	If the sequence of random variables $\{X_n\}$ is as defined in equation $(3.1)$, then the one-dimensional density of the process is
 	\begin{eqnarray}
 		f_n(x_n) = \alpha x_n^{\alpha-1};         \mbox{     } 0 <x_n <1.
 	\end{eqnarray}
 \end{corollary}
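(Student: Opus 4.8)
The plan is to recover the density simply by differentiating the marginal distribution function already obtained in part (b) of the preceding Theorem. First I would recall that part (b) gives, for every $n$, the cumulative distribution function $F_n(x) = P(X_n \leq x) = x^{\alpha}$ for $x$ in the unit interval, together with $F_n(x) = 0$ for $x \leq 0$ and $F_n(x) = 1$ for $x \geq 1$. This $F_n$ is continuous on $\R$ and continuously differentiable on $(0,1)$, hence absolutely continuous, so it admits a density $f_n$ satisfying $f_n(x) = F_n'(x)$ at every point where the derivative exists.

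Next I would carry out the differentiation: for $0 < x < 1$ one has $\frac{d}{dx}\, x^{\alpha} = \alpha x^{\alpha-1}$, which is exactly the asserted expression, while the derivative vanishes off the unit interval. Since $\alpha > 0$, the function $f_n(x) = \alpha x^{\alpha-1}$ on $(0,1)$ is nonnegative and integrates to $1$, so it is a genuine probability density, namely the $PFD(\alpha)$ density, as it must be.

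There is no real obstacle here: the argument is a one-line computation once part (b) of the Theorem is available, mirroring the derivation of Corollary 1 from part (a) of Theorem 1. The only point worth flagging is that the resulting density depends neither on $n$ nor on the innovation parameter $\delta$, which is consistent with the stationarity asserted in part (a) of the Theorem and with $X_n \stackrel{d}{=} U^{1/\alpha}$.
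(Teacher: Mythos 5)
Your proposal is correct and is essentially the paper's own argument: differentiate the marginal distribution function $F_n(x)=x^{\alpha}$ established in the preceding theorem to obtain $\alpha x^{\alpha-1}$ on $(0,1)$. (You correctly cite part (b) of that theorem for the CDF; the paper's one-line proof mistakenly refers to ``part (a) of Theorem 1,'' but the content is identical.)
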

 \begin{proof}
 	The proof will follow by taking differentiation of part $(a)$ of Theorem 1.
 \end{proof}

 \begin{corollary} 
 	If the sequence of random variables $\{X_n\}$ is as defined in equation $(3.1)$, then the mean and variance are 
 	\begin{eqnarray}
 		E(X_n) &=& \frac{\alpha}{ \alpha +1} \\
 		Var(X_n) &=& \frac{\alpha}{(\alpha +1)^2(\alpha +2)}.
 	\end{eqnarray}
 \end{corollary}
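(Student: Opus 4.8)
The plan is to obtain both quantities directly from the one-dimensional density established in the preceding corollary: by that result, for every $n$ the variable $X_n$ has the $PFD(\alpha)$ density $f_n(x)=\alpha x^{\alpha-1}$ on $(0,1)$. This is the same computation already carried out for the Kundu process in Corollary 2, with the single parameter $\alpha$ here playing the role that $\alpha+\beta$ played there, so one could alternatively simply invoke the known moments of a power function distribution.

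First I would evaluate the raw moments $E(X_n^k)=\int_0^1 x^k\,\alpha x^{\alpha-1}\,dx=\alpha/(\alpha+k)$, which for $k=1$ gives $E(X_n)=\alpha/(\alpha+1)$ and for $k=2$ gives $E(X_n^2)=\alpha/(\alpha+2)$. Then I would form $Var(X_n)=E(X_n^2)-[E(X_n)]^2=\frac{\alpha}{\alpha+2}-\frac{\alpha^2}{(\alpha+1)^2}$ and place it over the common denominator $(\alpha+1)^2(\alpha+2)$; since $(\alpha+1)^2-\alpha(\alpha+2)=1$, the numerator collapses to $\alpha$, yielding $Var(X_n)=\alpha/[(\alpha+1)^2(\alpha+2)]$.

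There is essentially no obstacle. The only point worth noting is that these formulas hold uniformly in $n$ (including $n=0$), which is precisely what part $(b)$ of Theorem 4 guarantees, namely the $PFD(\alpha)$ marginal for all $n$, together with the stationarity from part $(a)$; consequently the mean and variance are free of $n$.
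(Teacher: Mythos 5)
Your proposal is correct and matches the paper's approach: the paper likewise derives these moments as an immediate consequence of the preceding corollary giving the $PFD(\alpha)$ density $\alpha x^{\alpha-1}$ on $(0,1)$ (the paper simply declares the computation trivial, while you spell out the raw moments $\alpha/(\alpha+k)$ and the algebra $(\alpha+1)^2-\alpha(\alpha+2)=1$). No gaps.
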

 \begin{proof}
 	The proof is trivial from Corrollary 1. 
 \end{proof}
 
 We refer to  Figure \ref{fig8} for the distribution function plots for different values of $\alpha$ and $\delta$. Since this Markov process has as its initial distribution the stationary distribution, the process is completely stationary, with this assumption we derive the auto-correlation function. 
 
 \begin{figure} [h!]
 	\begin{subfigure}{8cm}
 		\centering
 		\includegraphics[width=10cm]{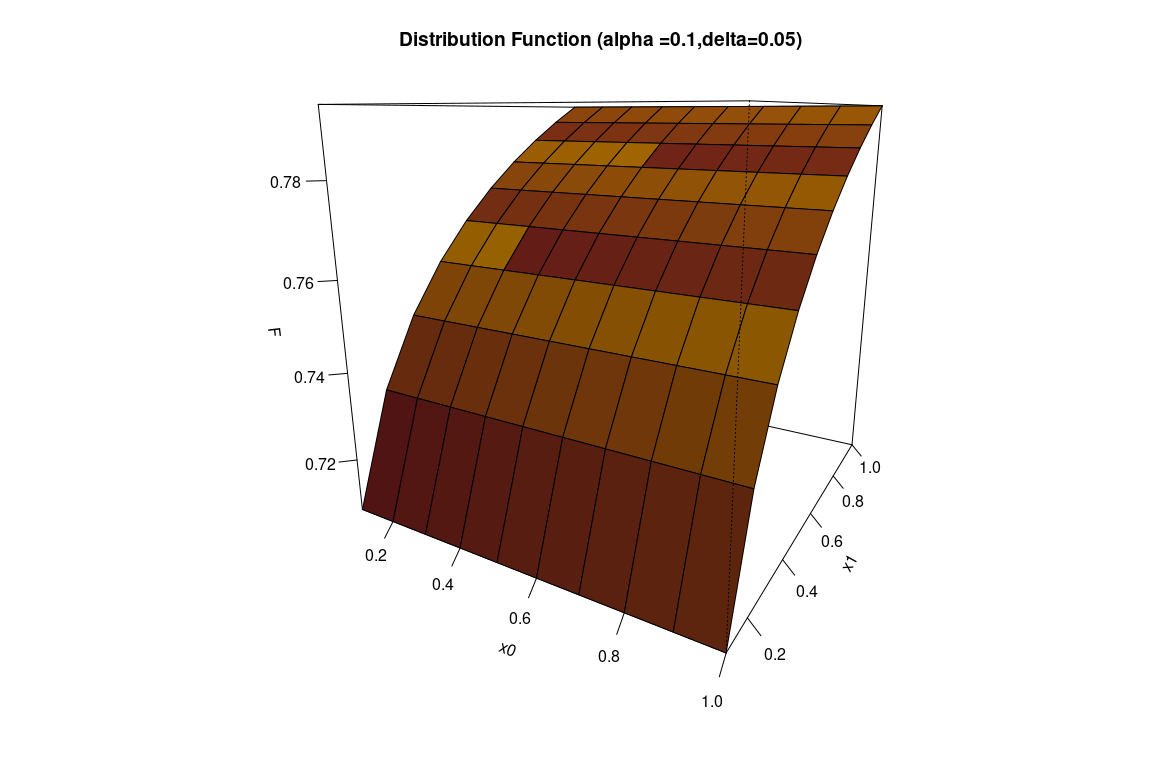}
 		\caption{$\alpha =0.1$ and $\delta =0.05$} 
 	\end{subfigure}
 	\begin{subfigure}{8cm}
 		\centering
 		\includegraphics[width=10cm]{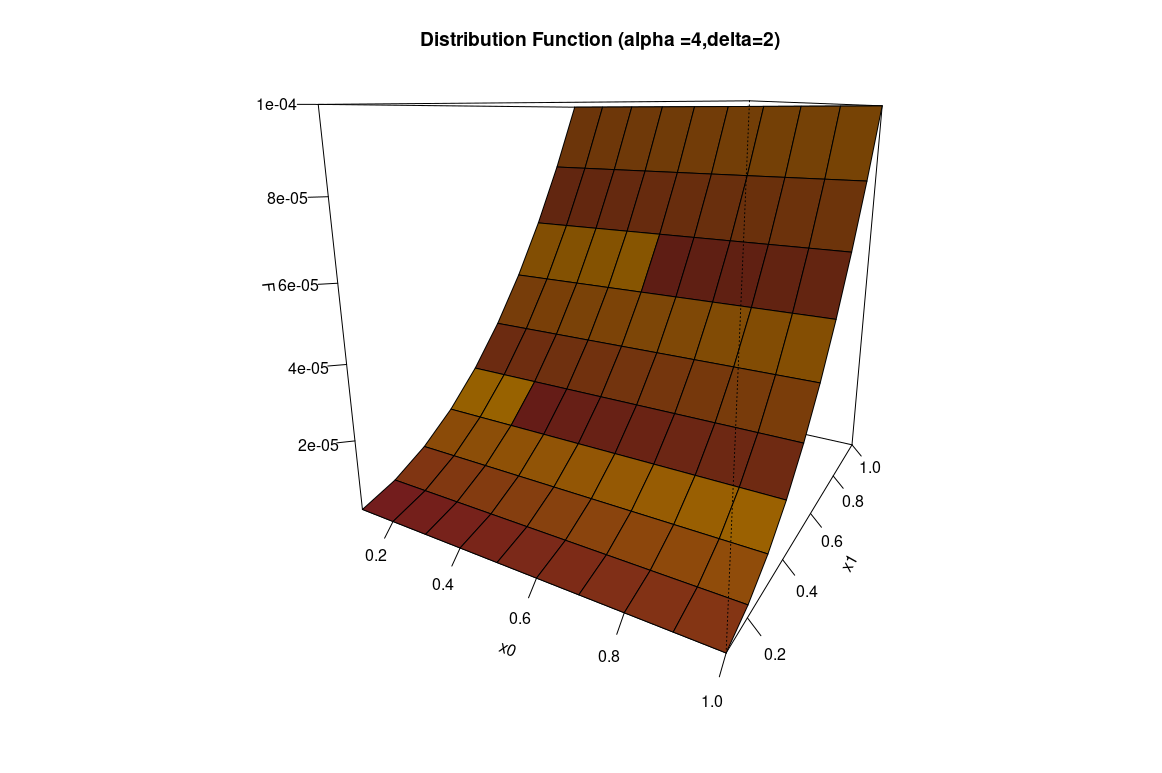}
 		\caption{$\alpha = 4$ and $\delta =2$} 
 	\end{subfigure}
 	\caption{Distribution function plots}
 	\label{fig8}
 \end{figure}
 
 \begin{theorem}
 	If the sequence of random variables $\{X_n\}$ is as defined in equation $(3.1)$, then the auto-correlation function will be
 	\begin{eqnarray}
 		Corr(X_0,X_n) = \frac{ \frac{\delta}{\delta +1} \Bigg[ \frac{\alpha}{\alpha +1} - \Bigg(\frac{1}{\alpha}+\frac{\delta+1}{\alpha-\delta} +1 \Bigg)^{-1}\Bigg] +\Bigg[\frac{1}{\alpha} + \frac{1+ \delta}{\alpha -\delta} +1\Bigg]^{-1} - \Bigg[\frac{\alpha}{ \alpha +1}\Bigg]^2}{\frac{\alpha}{(\alpha +1)^2(\alpha +2)}}	\nonumber\\
 	\end{eqnarray}
 	which is monotone in $\delta$.
 \end{theorem}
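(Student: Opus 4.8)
The plan is to reduce the entire computation to the single cross‑moment $E(X_0X_1)$. By Theorem~4 the process $\{X_n\}$ is stationary with $\mathrm{PFD}(\alpha)$ marginals, so (as recorded in Corollary~4) $E(X_0)=E(X_1)=\alpha/(\alpha+1)$ and $\mathrm{Var}(X_0)=\mathrm{Var}(X_1)=\alpha/((\alpha+1)^2(\alpha+2))$; in particular the denominator of the autocorrelation is already known and is free of $\delta$. For the numerator I would use the representation that follows immediately from the construction (3.1), namely $X_0=U_0^{1/\alpha}$ and $X_1=\max\{U_0^{1/(\alpha-\delta)},U_1^{1/\delta}\}$ with $U_0,U_1$ i.i.d.\ $\mathrm{uniform}(0,1)$.

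To evaluate $E(X_0X_1)$ I would split according to which coordinate realizes the maximum defining $X_1$. On $A=\{U_0^{1/(\alpha-\delta)}\ge U_1^{1/\delta}\}=\{U_1\le U_0^{\delta/(\alpha-\delta)}\}$ one has $X_0X_1=U_0^{1/\alpha+1/(\alpha-\delta)}$, while on $A^{c}$ one has $X_0X_1=U_0^{1/\alpha}U_1^{1/\delta}$. Two elementary iterated integrals over $(0,1)^2$ then give
\[
E\!\left[X_0X_1\,\mathbf{1}_{A}\right]=\Big(\tfrac1\alpha+\tfrac{1+\delta}{\alpha-\delta}+1\Big)^{-1},\qquad
E\!\left[X_0X_1\,\mathbf{1}_{A^{c}}\right]=\frac{\delta}{1+\delta}\left[\frac{\alpha}{\alpha+1}-\Big(\tfrac1\alpha+\tfrac{1+\delta}{\alpha-\delta}+1\Big)^{-1}\right].
\]
Adding these two pieces and inserting the result into $\mathrm{Corr}(X_0,X_1)=[E(X_0X_1)-(\alpha/(\alpha+1))^2]/[\alpha/((\alpha+1)^2(\alpha+2))]$ reproduces the displayed expression verbatim.

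For the monotonicity claim, note first that $\tfrac{1+\delta}{\alpha-\delta}+1=\tfrac{\alpha+1}{\alpha-\delta}$, so that $E(X_0X_1)$ collapses to $\frac{1}{1+\delta}\big[\frac{\alpha\delta}{\alpha+1}+\frac{\alpha(\alpha-\delta)}{\alpha^2+2\alpha-\delta}\big]=\frac{\alpha\,P(\delta)}{(\alpha+1)\,Q(\delta)}$, where $P(\delta)=-\delta^{2}+(\alpha^{2}+\alpha-1)\delta+\alpha(\alpha+1)$ and $Q(\delta)=(1+\delta)(\alpha^{2}+2\alpha-\delta)$, the latter being strictly positive for $0<\delta<\alpha$. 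Since the variance does not depend on $\delta$, it suffices to differentiate $P/Q$. The arithmetic is made painless by the identity $Q-P=\alpha(1+\delta)$ (whence $Q'-P'=\alpha$): the quotient rule then gives $P'Q-PQ'=\alpha\big[(1+\delta)P'-P\big]=-\alpha(1+\delta)^{2}$, so
\[
\frac{d}{d\delta}\,E(X_0X_1)=-\,\frac{\alpha^{2}(1+\delta)^{2}}{(\alpha+1)\,Q(\delta)^{2}}<0,\qquad 0<\delta<\alpha,
\]
and hence the autocorrelation is strictly decreasing in $\delta$. As a sanity check it tends to $1$ as $\delta\downarrow0$ (then $X_1\to X_0$) and to $0$ as $\delta\uparrow\alpha$ (then $X_0^{\alpha/(\alpha-\delta)}\to0$ a.s.\ and $X_1$ becomes independent of $X_0$). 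This differentiation‑and‑sign step is the only place where anything could go wrong, and spotting the identity $Q-P=\alpha(1+\delta)$ is the single observation that makes it routine.

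Finally, although the statement is phrased for a general lag $n$, the formula displayed is exactly the lag‑one autocorrelation, which is what the argument above delivers. The general lag reduces to it: iterating (3.1) gives $X_n=\max\{U_0^{1/(\alpha-\delta_n)},M_n\}$, where $M_n$ is a maximum of powers of $U_1,\dots,U_n$, is independent of $X_0$, and satisfies $M_n\sim\mathrm{PFD}(\delta_n)$ for $\delta_n:=\alpha\big(1-((\alpha-\delta)/\alpha)^{n}\big)\in(0,\alpha)$; thus $(X_0,X_n)$ has the same joint law as the lag‑one pair with $\delta$ replaced by $\delta_n$, so $\mathrm{Corr}(X_0,X_n)$ is given by the same formula with $\delta_n$ in place of $\delta$, and since $\delta_n$ is increasing in $\delta$ the monotonicity conclusion carries over.
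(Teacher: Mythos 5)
Your computation of $E(X_0X_1)$ is exactly the paper's: both arguments split the expectation according to which term attains the maximum defining $X_1$ (equivalently, whether $U_1\le U_0^{\delta/(\alpha-\delta)}$), and the two iterated integrals you write down are the same two integrals appearing in the paper's display, with the same values, so the displayed numerator follows identically. Where you genuinely go beyond the paper is on the two points it leaves dangling. First, the paper never proves the monotonicity claim that appears in the theorem statement; it only points to a correlation plot and calls monotonicity ``evident'' from the figure (Remark 2). Your reduction of $E(X_0X_1)$ to $\tfrac{\alpha}{\alpha+1}\,P(\delta)/Q(\delta)$ with $P(\delta)=-\delta^{2}+(\alpha^{2}+\alpha-1)\delta+\alpha(\alpha+1)$, $Q(\delta)=(1+\delta)(\alpha^{2}+2\alpha-\delta)$, the identity $Q-P=\alpha(1+\delta)$, and the resulting derivative $-\alpha^{2}(1+\delta)^{2}/\bigl((\alpha+1)Q^{2}\bigr)<0$ all check out, so you have turned the asserted monotonicity into an actual proof; the limiting values $1$ (as $\delta\downarrow 0$, using $E(X_0^2)=\alpha/(\alpha+2)$) and $0$ (as $\delta\uparrow\alpha$) confirm the sign. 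Second, the theorem is phrased for $\mathrm{Corr}(X_0,X_n)$ but the paper only ever computes the lag-one moment; your observation that iterating (3.1) makes $(X_0,X_n)$ equal in law to a lag-one pair with $\delta$ replaced by $\delta_n=\alpha\bigl(1-((\alpha-\delta)/\alpha)^{n}\bigr)$ is correct (with $r=(\alpha-\delta)/\alpha$, the innovations $U_1,\dots,U_n$ contribute exponent mass $\delta\sum_{j=0}^{n-1}r^{j}=\delta_n$, and $\delta_n$ is increasing in $\delta$), and it is the cleanest way to make the general-lag statement meaningful. In short: same core calculation as the paper, but your write-up actually proves two assertions the paper only gestures at.
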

 \begin{proof} For  $E(X_0X_1)$ we have
 	\begin{eqnarray}
 		E(X_0X_1) &=& E\Big(U^{\frac{1}{\alpha}}_0 \max\Big \{ X^{\frac{\alpha}{\alpha -\delta}}_0, U^{1/\delta}_1\Big \} \Big) \nonumber \\
 		&=& E \Big( U^{\frac{1}{\alpha}}_0 \max\Big \{ U^{\frac{1}{\alpha -\delta}}_0, U^{1/\delta}_1\Big \}\Big) \nonumber\\
 		&=& \int_{0}^{1} u^{\frac{1}{\alpha}}_0 \Bigg[ \int_{u^{\frac{\delta}{\alpha-\delta}}_0}^{1}  u^{\frac{1}{\delta}}_1 du_1\Bigg] du_0  + \int_{0}^{1} u^{\frac{1}{\alpha}}_0 \Bigg[ \int_{0}^{u^{\frac{\delta}{\alpha-\delta}}_0}  u^{\frac{1}{\alpha - \delta}}_0 du_1\Bigg] du_0 \nonumber  \\
 		&=&  \frac{\delta}{\delta +1} \Bigg[ \frac{\alpha}{\alpha +1} - \Bigg(\frac{1}{\alpha}+\frac{\delta+1}{\alpha-\delta} +1 \Bigg)^{-1}\Bigg] + \Bigg[\frac{1}{\alpha} + \frac{1+ \delta}{\alpha -\delta} +1\Bigg]^{-1} 
 	\end{eqnarray}
 	
 	Thus, $Cov(X_0,X_1)$ is
 	\begin{eqnarray}
 		Cov(X_0,X_1)  = E(X_0 X_1) - E(X_0) E(X_1) &=&  \frac{\delta}{\delta +1} \Bigg[ \frac{\alpha}{\alpha +1} - \Bigg(\frac{1}{\alpha}+\frac{\delta+1}{\alpha-\delta} +1 \Bigg)^{-1}\Bigg] \nonumber \\ &&+ \Bigg[\frac{1}{\alpha} + \frac{1+ \delta}{\alpha -\delta} +1\Bigg]^{-1} - \Bigg[\frac{\alpha}{ \alpha +1}\Bigg]^2,
 	\end{eqnarray}
 	from which the correlatgion is of the form stated.
 \end{proof}
 
 \begin{remark}
 	From Figure \ref{fig9} it is evident that the auto-correlation function for the process in $(3.1)$ is a monotonic function in $\delta$.
 \end{remark}

 \begin{figure}[h!] 
 	\centering
 	\includegraphics[width=.7\linewidth]{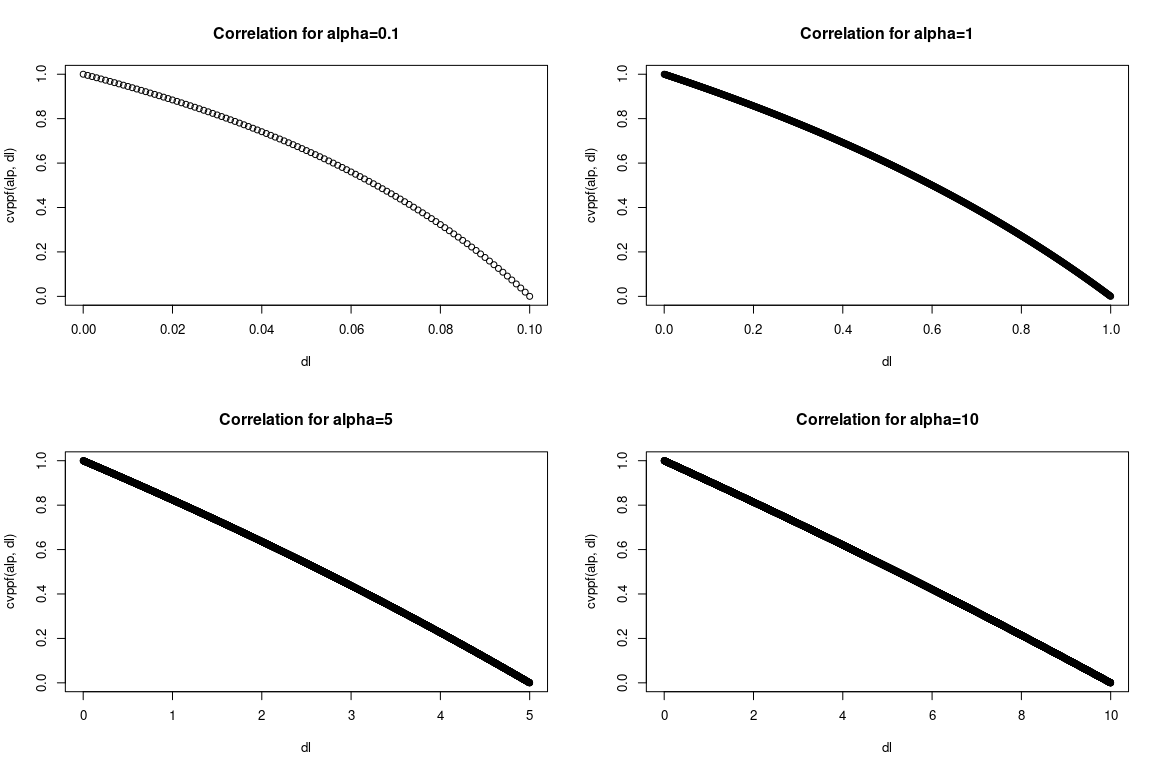}
 	\caption{Correlation plot for different values of $\alpha =0.1,1,5,10$ and $\delta$ ranging from $0$ to $\alpha$ } 
 	\label{fig9}
 \end{figure}

 \begin{remark}
 	The bivarite distribution function defined in $(3.7)$ does not have a density. Hence, likelihood based inferences such as maximum likelihood and the Bayesian approach cannot be used for estimating the parameters of the process.
 \end{remark}
 
 \begin{theorem}
 	If the sequence of random variables $\{ X_n\}$ is as defined in equation $(3.1)$ then 
 	\begin{eqnarray}
 		P(X_1<X_0) =\frac{\alpha}{\alpha+\delta}.
 	\end{eqnarray}
 \end{theorem}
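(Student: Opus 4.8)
The plan is to work directly from the defining recursion (3.1). With $X_0 = U_0^{1/\alpha}$ we have, exactly as in the proof of part (c) of Theorem 4, the representation $X_1 = \max\{X_0^{\alpha/(\alpha-\delta)}, U_1^{1/\delta}\} = \max\{U_0^{1/(\alpha-\delta)}, U_1^{1/\delta}\}$, where $U_0$ and $U_1$ are i.i.d.\ $uniform(0,1)$ random variables. (By the stationarity in part (a) of Theorem 4 the same computation would in fact yield $P(X_n < X_{n-1})$ for every $n$, so there is no loss in looking at the first pair.)

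The key step is the observation that one of the two maximands is irrelevant to the comparison. Indeed, $\{X_1 < X_0\} = \{U_0^{1/(\alpha-\delta)} < X_0\}\cap\{U_1^{1/\delta} < X_0\}$, and since $0 < U_0 < 1$ and $\tfrac{1}{\alpha-\delta} > \tfrac{1}{\alpha}$ (because $0 < \delta < \alpha$), we have $U_0^{1/(\alpha-\delta)} < U_0^{1/\alpha} = X_0$ almost surely; that is, the first of these events has probability one. Hence $P(X_1 < X_0) = P\big(U_1^{1/\delta} < U_0^{1/\alpha}\big)$.

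From here I would raise both sides to the power $\alpha$ to obtain the equivalent event $\{U_1^{\alpha/\delta} < U_0\}$, then integrate: conditioning on $U_1$ (equivalently, carrying out the $du_0$ integration first) and using $P(U_0 > t) = 1-t$ for $t\in(0,1)$ gives
\[
P(X_1 < X_0) = 1 - E\big(U_1^{\alpha/\delta}\big) = 1 - \frac{1}{(\alpha/\delta)+1} = 1 - \frac{\delta}{\alpha+\delta} = \frac{\alpha}{\alpha+\delta},
\]
which is the asserted value.

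I do not anticipate a genuine obstacle here; the only point requiring care is the monotonicity inequality $u^{1/(\alpha-\delta)} < u^{1/\alpha}$ for $u\in(0,1)$, which is precisely what collapses the bivariate comparison to a one-dimensional integral involving only the innovation variable $U_1$. As a consistency check, the formula tends to $\tfrac12$ as $\delta \uparrow \alpha$, in agreement with the fact that in that limit $U_0^{1/(\alpha-\delta)} \to 0$ and $X_1$ degenerates to an independent $PFD(\alpha)$ copy of $X_0$.
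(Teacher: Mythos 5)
Your proof is correct and follows essentially the same route as the paper: both reduce the event $\{X_1<X_0\}$ to $\{U_1^{1/\delta}<U_0^{1/\alpha}\}$ via the observation that $u^{1/(\alpha-\delta)}<u^{1/\alpha}$ for $u\in(0,1)$, and then evaluate the resulting one-dimensional integral. The only (immaterial) difference is that you condition on $U_1$ and compute $1-E\bigl(U_1^{\alpha/\delta}\bigr)$, whereas the paper conditions on $U_0$ and integrates $P\bigl(U_1<u_0^{\delta/\alpha}\bigr)\,du_0$.
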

 
 \begin{proof}
 	By definition of the process we have
 	$ X_1=\max\{X_0^{\alpha/(\alpha-\delta)},U_1^{1/\delta}\}$
 	from equation (3.1) with $X_0=U_0^{1/\alpha}$, where $U_0$ and $U_1$ are i.i,d $uniform(0,1)$ variables. 
 	We refer to Figures \ref{2rfig6}--\ref{2rfig7} for examples of simulated sample paths of steps $n=10$ and $n=100$ for the second order Kundu process.
 	Then, $P(X_1 <X_0)$ is given by 
 	\begin{eqnarray}
 		P(X_1<X_0)&=&P(\max\{X_0^{\alpha/(\alpha-\delta)},U_1^{1/\delta}\}<X_0) \nonumber
 		\\ &=&P(\max\{U_0^{1/(\alpha-\delta)},U_1^{1/\delta}\}<U_0^{1/\alpha}) \nonumber
 		\\&=&\int_0^1\left[P(\max\{u_0^{1/(\alpha-\delta)},U_1^{1/\delta}\}<u_0^{1/\alpha})\right]du_0 \nonumber
 		\\&=&\int_0^1\left[P(u_0^{1/(\alpha-\delta)}<u_0^{1/\alpha}, U_1^{1/\delta}<u_0^{1/\alpha})\right] du_0 \nonumber
 		\\  &=&\int_0^1\left[P( U_1^{1/\delta}<u_0^{1/\alpha})\right] du_0  \ \ \mbox{because} \ \ u_0^{1/(\alpha-\delta)}<u_0^{1/\alpha} \ \ \mbox{is always true} \nonumber
 		\\ &=&\int_0^1\left[P( U_1<u_0^{\delta/\alpha})\right] du_0 \nonumber
 		\\ &=&\frac{1}{(\delta/\alpha)+1} \nonumber
 		\\ &=&\frac{\alpha}{\alpha+\delta}.
 	\end{eqnarray}
 \end{proof}
 
 \section{Second (and higher) order processes}
 A second order Kundu process is readily defined by assuming that $U_{-2}.U_{-1},U_0,U_1,U_2,...$ are i.id. $uniform(0,1)$ random variables and for $n=0,1,2,...$ defining
 \begin{equation} \label{PFDP(2)}
 	X_n= \max \left\{U_n^{1/\alpha_0},U_{n-1}^{1/\alpha_1},U_{n-2}^{1/\alpha_2} \right\},
 \end{equation}
 where $\alpha_0, \alpha_1, \alpha_2 >0$.  This is a stationary process. It is readily verified that, for each $n$, 
 \begin{equation} 
 	X_n \sim PFD(\alpha_0+\alpha_1+\alpha_2).
 \end{equation}
 
 \begin{proposition}
 	If the sequence of random variables $\{X_n\}$ is as defined in equation $(4.1)$, then the one-dimensional density of the process is 
 	\begin{eqnarray}
 		f(x_n) = (\alpha_0 + \alpha_1 + \alpha_2) x^{(\alpha_0 + \alpha_1 + \alpha_2)-1};   \mbox{      } 0\leq x_n \leq 1.
 	\end{eqnarray}
 \end{proposition}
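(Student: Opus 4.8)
The plan is to obtain the density by first pinning down the one-dimensional cumulative distribution function of $X_n$ and then differentiating. The statement in equation $(4.2)$ already asserts that $X_n \sim PFD(\alpha_0+\alpha_1+\alpha_2)$, so in principle one could simply invoke that and differentiate the PFD distribution function, exactly as was done in Corollary 1 for the first-order Kundu process. However, since the excerpt only states $(4.2)$ without an explicit derivation, I would include that short derivation here for completeness.

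First I would write, for $0 \le x \le 1$,
\begin{eqnarray*}
F(x) = P(X_n \le x) &=& P\Big( \max\{U_n^{1/\alpha_0}, U_{n-1}^{1/\alpha_1}, U_{n-2}^{1/\alpha_2}\} \le x \Big) \\
&=& P\Big( U_n^{1/\alpha_0} \le x,\ U_{n-1}^{1/\alpha_1} \le x,\ U_{n-2}^{1/\alpha_2} \le x \Big).
\end{eqnarray*}
Then I would use the independence of $U_n, U_{n-1}, U_{n-2}$ to factor this into $P(U_n \le x^{\alpha_0})\,P(U_{n-1} \le x^{\alpha_1})\,P(U_{n-2} \le x^{\alpha_2})$, and since each $U$ is $uniform(0,1)$ this equals $x^{\alpha_0}\,x^{\alpha_1}\,x^{\alpha_2} = x^{\alpha_0+\alpha_1+\alpha_2}$ on $[0,1]$ (and $0$ below, $1$ above). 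This confirms the $PFD(\alpha_0+\alpha_1+\alpha_2)$ claim of $(4.2)$.

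Finally, differentiating $F(x) = x^{\alpha_0+\alpha_1+\alpha_2}$ with respect to $x$ on the open interval $(0,1)$ yields $f(x_n) = (\alpha_0+\alpha_1+\alpha_2)\,x_n^{(\alpha_0+\alpha_1+\alpha_2)-1}$, which is the asserted expression. There is no real obstacle here: the argument is entirely parallel to the proof of part $(a)$ of Theorem 1 and its Corollary 1, the only difference being that the maximum is taken over three independent power-function variables rather than two, so the exponents add up as $\alpha_0+\alpha_1+\alpha_2$. The one point worth a sentence of care is noting the support: the formula holds for $0 < x_n < 1$, with the distribution function being $0$ for $x_n \le 0$ and $1$ for $x_n \ge 1$, so that the stated density is the genuine Radon--Nikodym derivative on $[0,1]$.
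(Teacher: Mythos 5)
Your proposal is correct and follows exactly the route the paper intends: it derives the distribution function $x^{\alpha_0+\alpha_1+\alpha_2}$ by factoring the maximum of three independent transformed uniforms (the three-variable analogue of the proof of part $(a)$ of Theorem 1) and then differentiates, just as Corollary 1 does for the first-order process. The paper leaves this proof implicit, so your explicit verification of $(4.2)$ is a reasonable, fully compatible filling-in of that gap.
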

 
 \begin{proposition}
 	If the sequence of random variables $\{X_n\}$ is as defined in equation $(4.1)$, then the mean and variance are 
 	\begin{eqnarray}
 		E(X_n) &=&  \frac{\alpha_0 + \alpha_1 + \alpha_2}{\alpha_0 + \alpha_1 + \alpha_2 + 1}  \\
 		V(X_n) &=&  \frac{\alpha_0 + \alpha_1 + \alpha_2}{(\alpha_0 + \alpha_1 + \alpha_2 +1)^2(\alpha_0 + \alpha_1 + \alpha_2 + 2)}.
 	\end{eqnarray}
 \end{proposition}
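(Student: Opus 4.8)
The plan is to reduce everything to the moments of a single power function distribution. Write $\gamma = \alpha_0 + \alpha_1 + \alpha_2$. By Proposition 1 (equivalently, by the observation recorded just after (4.1) that $X_n \sim PFD(\gamma)$), for every $n$ the variable $X_n$ has density $f(x) = \gamma x^{\gamma - 1}$ on $[0,1]$. Hence it suffices to compute the first two raw moments of this density and assemble the variance. This is exactly the computation already carried out in Corollary 2 and Corollary 4, now with $\alpha+\beta$ (respectively $\alpha$) replaced by $\gamma$, so no new ideas are needed.

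First I would compute the mean,
\begin{eqnarray*}
E(X_n) = \int_0^1 x\, \gamma x^{\gamma - 1}\, dx = \frac{\gamma}{\gamma + 1},
\end{eqnarray*}
which upon substituting $\gamma = \alpha_0 + \alpha_1 + \alpha_2$ gives the stated formula for $E(X_n)$. Next I would compute the second raw moment,
\begin{eqnarray*}
E(X_n^2) = \int_0^1 x^2\, \gamma x^{\gamma - 1}\, dx = \frac{\gamma}{\gamma + 2}.
\end{eqnarray*}

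The variance then follows from $V(X_n) = E(X_n^2) - \big(E(X_n)\big)^2$:
\begin{eqnarray*}
V(X_n) = \frac{\gamma}{\gamma + 2} - \left(\frac{\gamma}{\gamma+1}\right)^2 = \frac{\gamma(\gamma+1)^2 - \gamma^2(\gamma+2)}{(\gamma+1)^2(\gamma+2)} = \frac{\gamma}{(\gamma+1)^2(\gamma+2)},
\end{eqnarray*}
the numerator collapsing because $\gamma(\gamma+1)^2 - \gamma^2(\gamma+2) = \gamma\big[(\gamma+1)^2 - \gamma(\gamma+2)\big] = \gamma$. Substituting back $\gamma = \alpha_0 + \alpha_1 + \alpha_2$ yields the claimed expression for $V(X_n)$.

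There is essentially no obstacle here: the only step requiring any care is the algebraic cancellation in the numerator of the variance, and that is identical in form to the simplifications already performed in Corollaries 2 and 4. Alternatively one could bypass the explicit integrations entirely by invoking the standard fact that a $PFD(\gamma)$ random variable has $k$-th moment $\gamma/(\gamma+k)$, which is implicit in the earlier parts of the paper.
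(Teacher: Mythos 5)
Your proof is correct and follows exactly the route the paper intends: the paper disposes of this proposition by calling it a "trivial consequence of Proposition 1," and your computation of $E(X_n)=\gamma/(\gamma+1)$, $E(X_n^2)=\gamma/(\gamma+2)$, and the resulting variance from the $PFD(\gamma)$ density with $\gamma=\alpha_0+\alpha_1+\alpha_2$ is precisely that omitted routine calculation. No issues.
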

 \begin{proof}
 	These are trivial consequences of Proposition 1. 
 \end{proof}
 
 We refer to Figures \ref{2rfig6}--\ref{2rfig7} for examples of simulated sample paths of steps $n=10$ and $n=100$. 
 
 \begin{figure} [h!]
 	\begin{subfigure}{9cm}
 		\centering
 		\includegraphics[width=7cm]{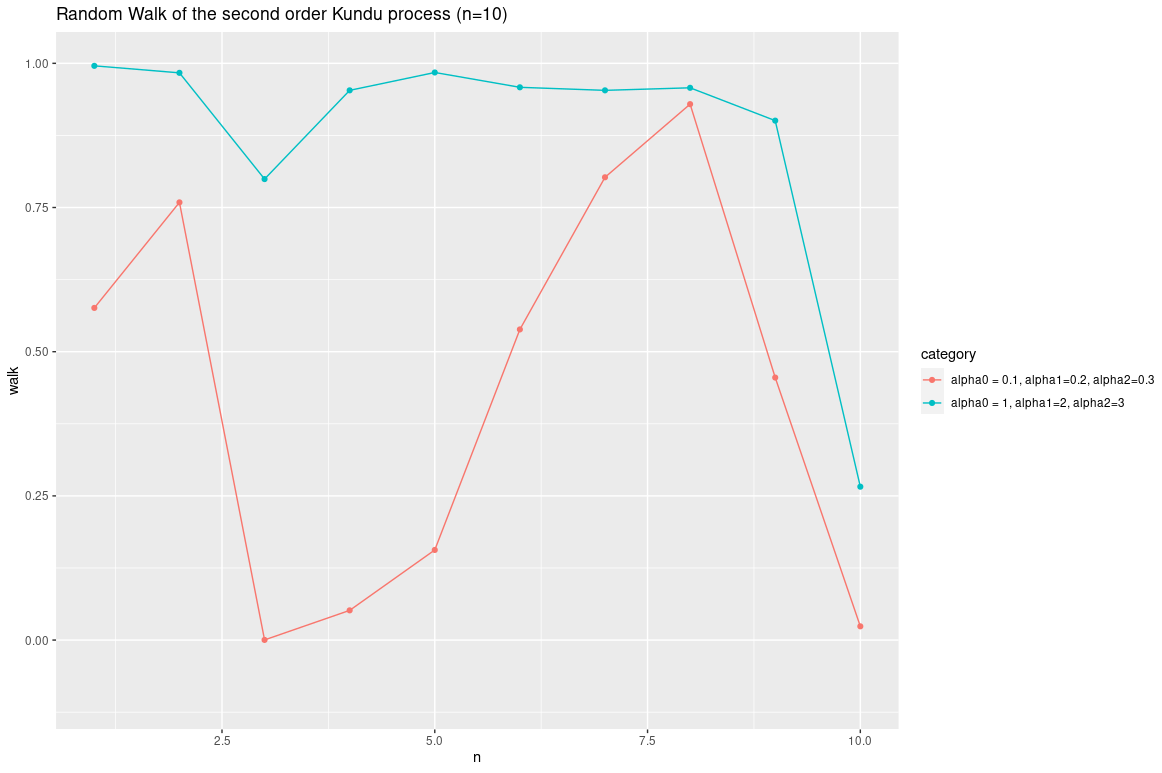}
 		\caption{$n=10$}  
 	\end{subfigure}
 	\begin{subfigure}{9cm}
 		\centering
 		\includegraphics[width=7cm]{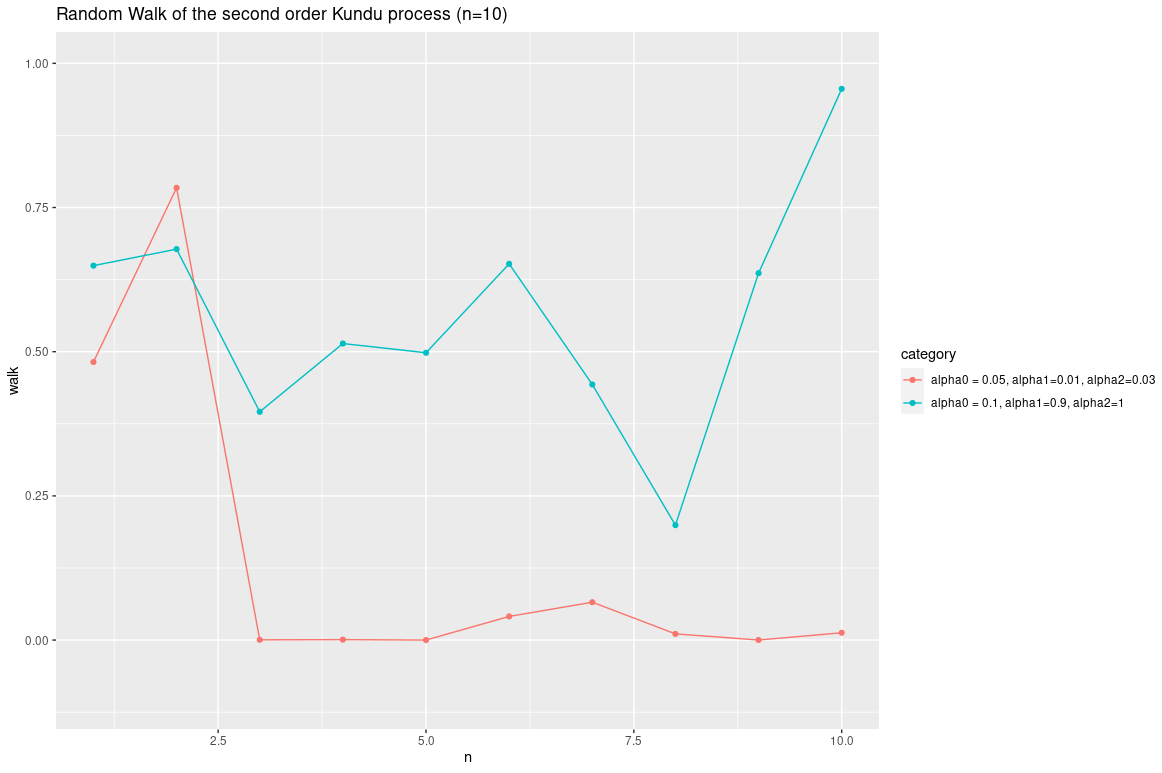}
 		\caption{$n=10$} 
 	\end{subfigure}
 	\caption{Sample Path with $10$ steps}
 	\label{2rfig6}
 \end{figure}
 
 \begin{figure} 
 	\begin{subfigure}{9cm}
 		\centering
 		\includegraphics[width=7cm]{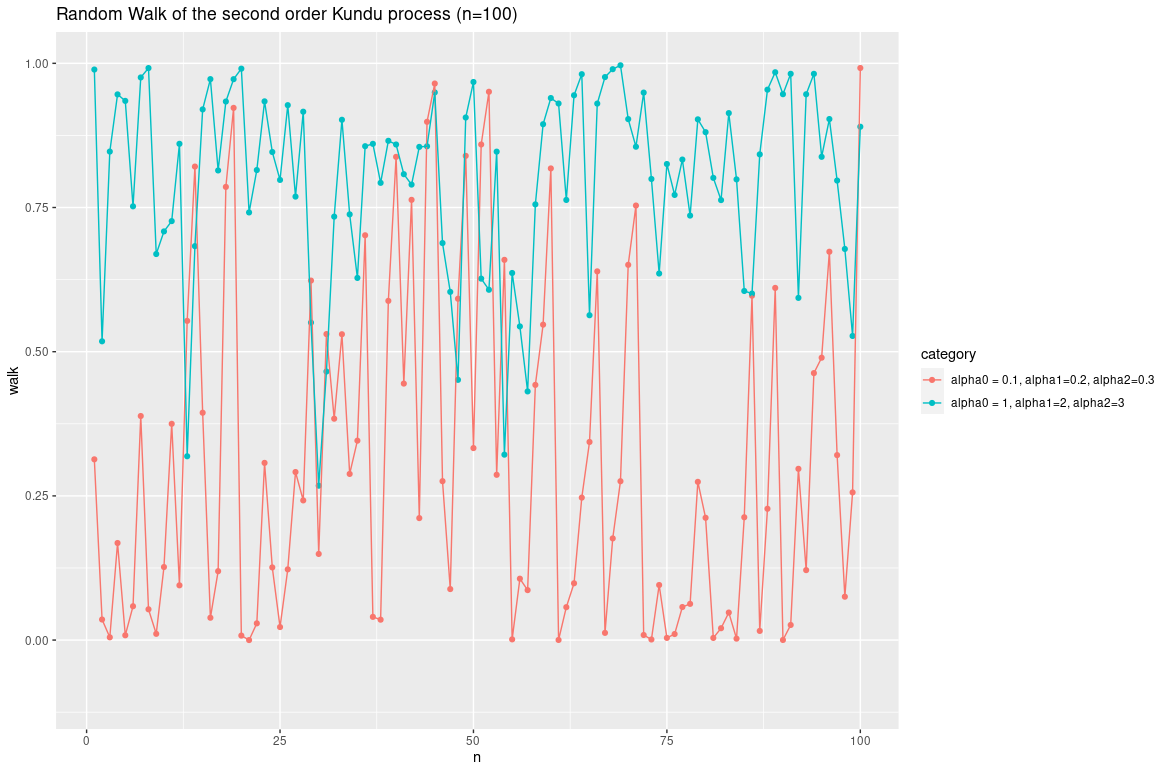}
 		\caption{$n=100$}  
 	\end{subfigure}
 	\begin{subfigure}{9cm}
 		\centering
 		\includegraphics[width=7cm]{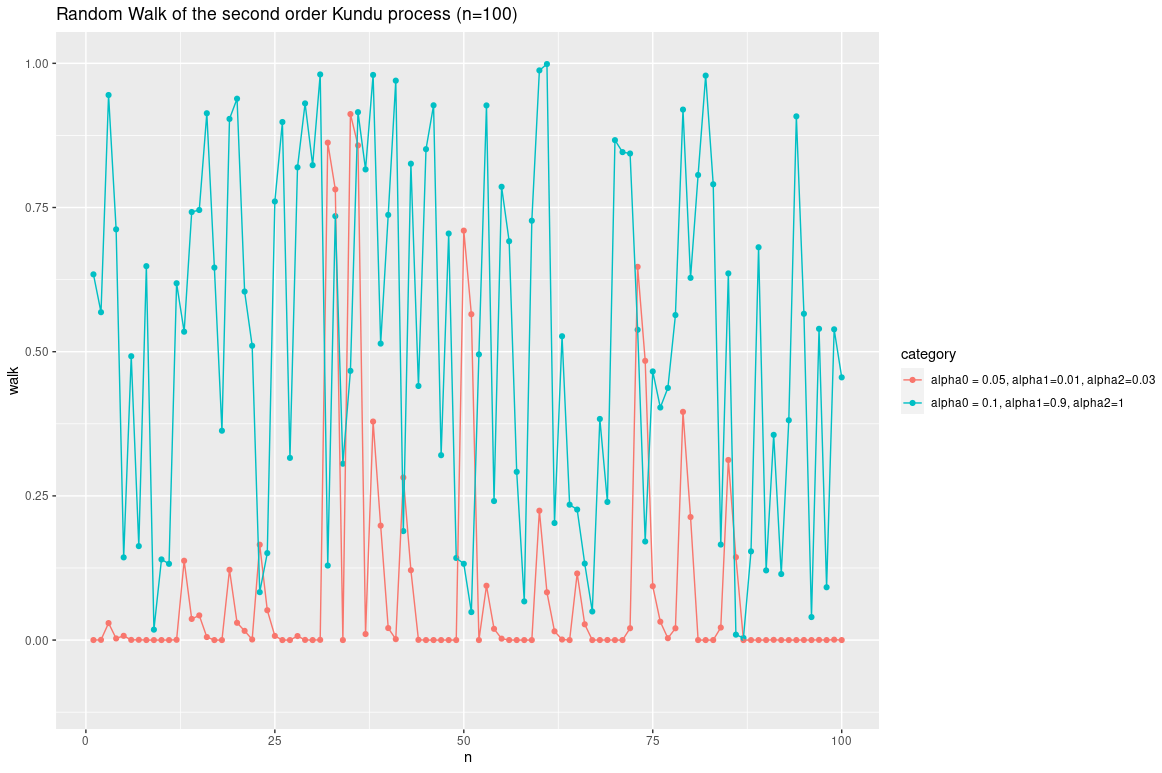}
 		\caption{$n=100$} 
 	\end{subfigure}
 	\caption{Sample Path with $100$ steps}
 	\label{2rfig7}
 \end{figure}
 
 \bigskip
 
 The joint distribution function of $(X_0,X_1)$ is given by
 \begin{equation}
 	F_{X_0,X_1}(x_0,x_1)= x_0^{\alpha_2}\min\{x_0^{\alpha_1},x_1^{\alpha_2} \} \min\{x_0^{\alpha_0},x_1^{\alpha_1}\}x_1^{\alpha_0} \ \ \ 0<x_0,x_1<1.
 \end{equation}
 
 We refer to  Figure \ref{2Rfig8} for the distribution function plots for different values of $\alpha_0$, $\alpha_1$ and $\alpha_2$.
 
 \begin{figure} [h!]
 	\begin{subfigure}{8cm}
 		\centering
 		\includegraphics[width=10cm]{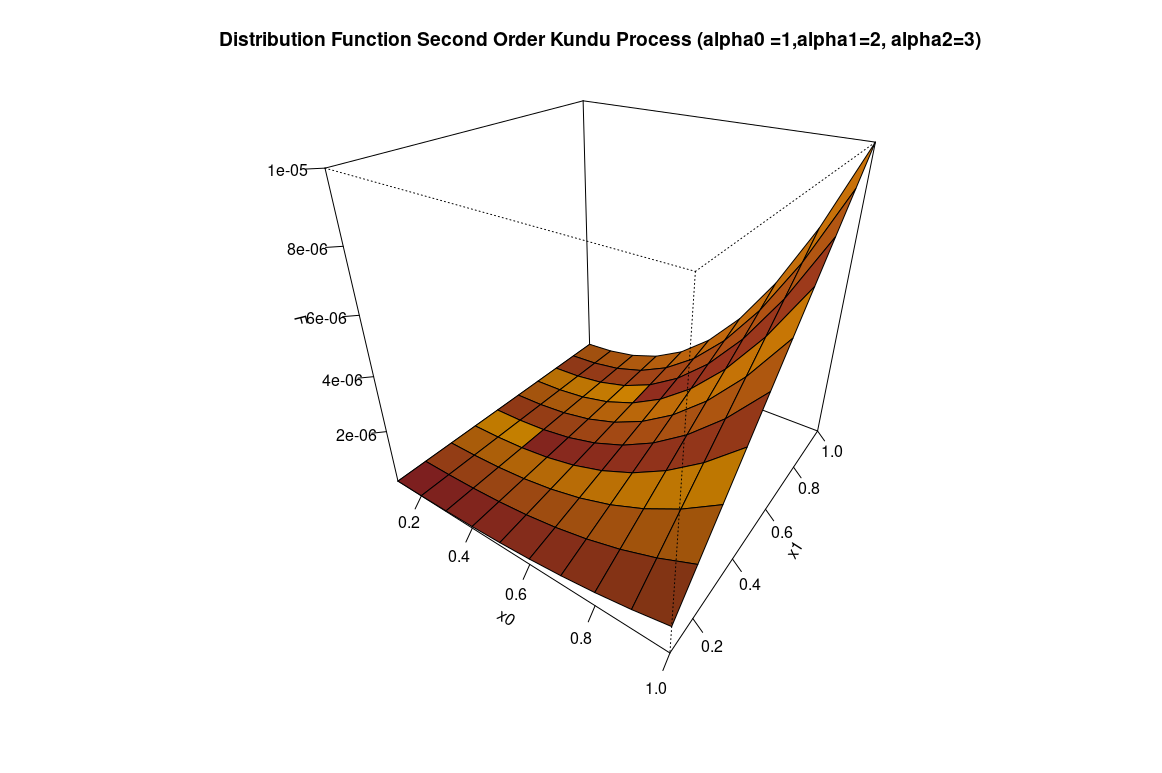}
 		\caption{$\alpha_0 =1$, $\alpha_1 =2$ and $\alpha_2 =3$} 
 	\end{subfigure}
 	\begin{subfigure}{8cm}
 		\centering
 		\includegraphics[width=10cm]{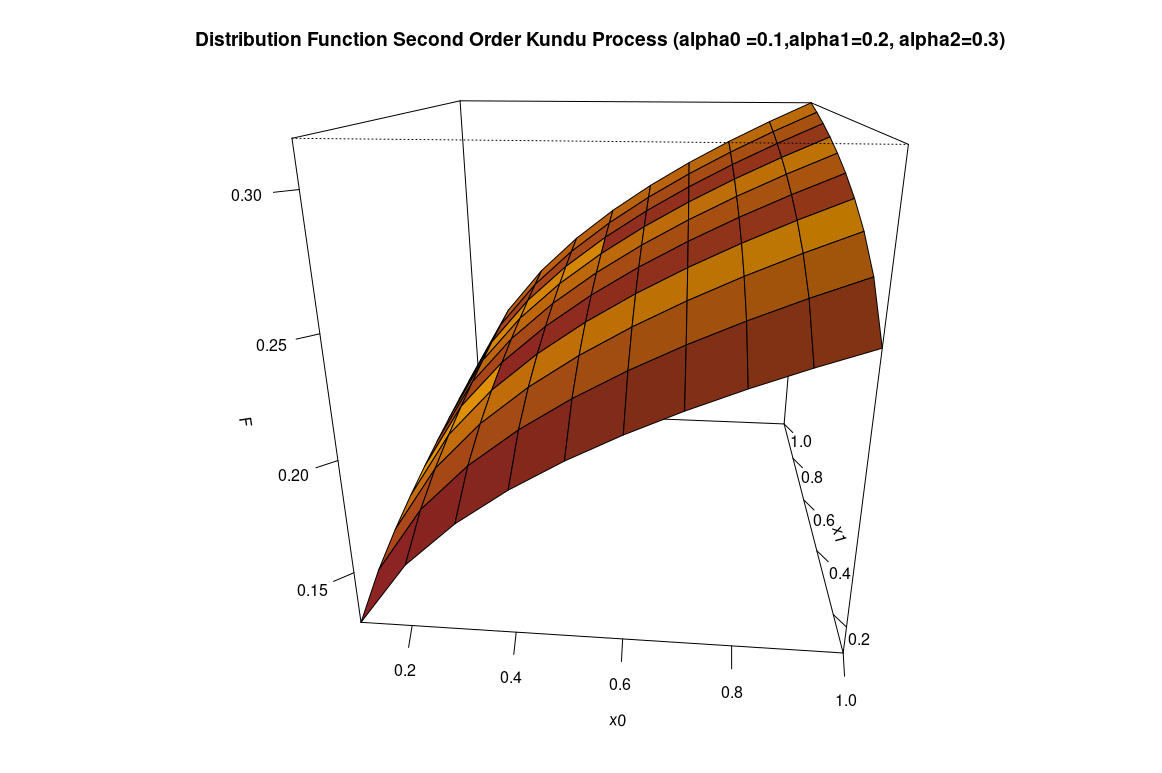}
 		\caption{$\alpha_0 =0.1$, $\alpha_1 =0.2$ and $\alpha_2 =0.3$} 
 	\end{subfigure}
 	\caption{Distribution function plots of second order Kundu process}
 	\label{2Rfig8}
 \end{figure}
 
 \bigskip
 
 \begin{remark}
 	Note that for the process defined in equation $(4.1)$  it will be difficult to derive an expression for $E(X_0X_1)$, consequently the auto-correlation function does not appear to have a simple expression. 
 \end{remark}
 
 \bigskip
 
 A reverse second order Kundu process would be defined by
 \begin{equation}\label{PFDP(2)-rev}
 	X_n=\max \left\{U_n^{1/\alpha_0},U_{n+1}^{1/\alpha_1},U_{n+2}^{\alpha_2} \right\},
 \end{equation}
 where $\alpha_0, \alpha_1, \alpha_2 >0$.  In this case also we have $X_n \sim PFD(\alpha_0+\alpha_1+\alpha_2).$
 
 \bigskip
 In parallel fashion one can define a $k$-th order Kundu process defined by assuming $U_{-k},...,U_{-1},U_{0},U_{1},U_{2}....$ are all i.i.d 
 $uniform(0,1)$ random variables and for $n=0,1,2,...$ defining
 \begin{equation} \label{PFDP(2)}
 	X_n= \max \left\{U_n^{1/\alpha_0},U_{n-1}^{1/\alpha_1},U_{n-2}^{1/\alpha_2},...,U_{n-k}^{1/\alpha_k} \right\},
 \end{equation}
 where $\alpha_0, \alpha_1, \alpha_2,...,\alpha_k >0$. Again, this is a stationary  non-Markovian process with each $X_n$ having $PFD(\alpha_0+...+\alpha_k)$ for $n=0,1,2,...$.
 
 \begin{proposition}
 	If the sequence of random variables $\{X_n\}$ is as defined in equation $(4.8)$, then the one-dimensional density of the process is 
 	\begin{eqnarray}
 		f(x_n) = \Big( \sum_{i=0}^{p} \alpha_i \Big) x^{(\sum_{i=0}^{p} \alpha_i)-1};   \mbox{      } 0\leq x_n \leq 1.
 	\end{eqnarray}
 \end{proposition}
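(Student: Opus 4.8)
The plan is to obtain the one-dimensional distribution function of $X_n$ in closed form and then differentiate. First I would note that for each $i=0,1,\dots,k$ the maximand $U_{n-i}^{1/\alpha_i}$ is $PFD(\alpha_i)$, since $P(U_{n-i}^{1/\alpha_i}\le x)=P(U_{n-i}\le x^{\alpha_i})=x^{\alpha_i}$ for $0\le x\le 1$, and that these $k+1$ maximands are mutually independent because $\{U_n\}$ is an i.i.d. $uniform(0,1)$ sequence. Each of them takes values in $[0,1]$ almost surely, so $X_n\in[0,1]$ as well, which fixes the support.

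Next I would compute, for $0\le x\le 1$,
\[
P(X_n\le x)=P\Big(\bigcap_{i=0}^{k}\{U_{n-i}^{1/\alpha_i}\le x\}\Big)=\prod_{i=0}^{k}P(U_{n-i}\le x^{\alpha_i})=\prod_{i=0}^{k}x^{\alpha_i}=x^{\sum_{i=0}^{k}\alpha_i},
\]
with the value $0$ for $x<0$ and $1$ for $x>1$. Equivalently, this is just the key property of power function distributions recalled at the start of Section 3 (an independent maximum of $PFD$ variables is again $PFD$, with parameter the sum), applied inductively to the $k+1$ factors; the base case $k=1$ is exactly Theorem 1(a). Either way, $X_n\sim PFD\big(\sum_{i=0}^{k}\alpha_i\big)$ for every $n$, which also justifies the stationary-marginal claim stated just after equation (4.8).

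Finally, differentiating $F(x)=x^{\sum_{i=0}^{k}\alpha_i}$ on $(0,1)$ yields the asserted density $f(x_n)=\big(\sum_{i=0}^{k}\alpha_i\big)\,x^{(\sum_{i=0}^{k}\alpha_i)-1}$ (the upper summation limit being written $p$ in the statement, in place of $k$). There is no real obstacle in this argument: the only steps meriting a word of care are the independence of the $k+1$ maximands, which is immediate, and the identification of the support as $[0,1]$; the rest is the routine product-of-probabilities computation and a one-line differentiation, or alternatively a trivial induction on $k$ via the two-variable max property.
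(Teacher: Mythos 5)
Your proof is correct and is essentially the argument the paper intends: the paper states this proposition without proof, relying on the max-of-independent-$PFD$ property recalled at the start of Section~3 and the computation already carried out for the first-order case in Theorem~1(a), which is exactly the product-of-CDFs calculation you perform before differentiating. Your note that the statement's upper limit $p$ should read $k$ is also accurate.
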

 
 \begin{proposition}
 	If the sequence of random variables $\{X_n\}$ is as defined in equation $(4.8)$, then the mean and variance are 
 	\begin{eqnarray}
 		E(X_n) &=&  \frac{\sum_{i=0}^{p} \alpha_i}{\sum_{i=0}^{p} \alpha_i + 1}  \\
 		V(X_n) &=&  \frac{\sum_{i=0}^{p} \alpha_i}{(\sum_{i=0}^{p} \alpha_i +1)^2(\sum_{i=0}^{p} \alpha_i + 2)}.
 	\end{eqnarray}
 \end{proposition}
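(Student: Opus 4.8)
The plan is to obtain both identities as immediate corollaries of Proposition 3, which has already identified the one-dimensional law of $X_n$ as $PFD(\gamma)$ with $\gamma := \sum_{i=0}^{p}\alpha_i$ and density $f(x_n)=\gamma x_n^{\gamma-1}$ on $[0,1]$. Since the marginal distribution is known explicitly, no further probabilistic analysis of the process is needed; the argument is exactly parallel to that of Corollary 2 (and of Proposition 2 in the second-order case), with $\alpha+\beta$ replaced throughout by $\gamma$.

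First I would compute the general raw moment of the $PFD(\gamma)$ distribution: for $r>0$, integrating $x^r$ against $\gamma x^{\gamma-1}$ over $[0,1]$ gives $E(X_n^r)=\gamma/(\gamma+r)$, the integral converging because $\gamma>0$. Taking $r=1$ yields the stated mean $E(X_n)=\gamma/(\gamma+1)$, and taking $r=2$ yields $E(X_n^2)=\gamma/(\gamma+2)$.

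Next I would form $V(X_n)=E(X_n^2)-[E(X_n)]^2 = \gamma/(\gamma+2) - \gamma^2/(\gamma+1)^2$ and combine the two terms over the common denominator $(\gamma+1)^2(\gamma+2)$. The numerator is $\gamma(\gamma+1)^2-\gamma^2(\gamma+2)=\gamma\big[(\gamma+1)^2-\gamma(\gamma+2)\big]=\gamma$, using the elementary identity $(\gamma+1)^2-\gamma(\gamma+2)=1$, so that $V(X_n)=\gamma/[(\gamma+1)^2(\gamma+2)]$. Re-substituting $\gamma=\sum_{i=0}^{p}\alpha_i$ then gives the two displayed expressions.

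There is no real obstacle here: the only points needing (minimal) care are the convergence of the moment integrals over the support $[0,1]$, which is automatic since $\gamma>0$, and the one-line algebraic simplification of the variance numerator. The substantive step — identifying the marginal distribution — was already carried out in Proposition 3, so the present proposition follows at once.
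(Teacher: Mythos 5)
Your proposal is correct and follows exactly the route the paper takes: the paper's proof is the one-line remark that the result is a trivial consequence of Proposition 3 (the $PFD\bigl(\sum_{i=0}^{p}\alpha_i\bigr)$ density), and you simply spell out the moment integrals and the algebraic simplification of the variance that this remark leaves implicit. Your computations $E(X_n^r)=\gamma/(\gamma+r)$ and the reduction of the variance numerator to $\gamma$ are both accurate.
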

 \begin{proof}
 	These are trivial consequences of Proposition 3. 
 \end{proof}
 
 \bigskip
 
 Now, recall the alternative  stationary power function$(\alpha)$ process is defined by;
 
 $X_0=U_0^{1/\alpha} \sim PF(\alpha)$ and for $n=1,2,...$ 
 \begin{equation}\label{ARPFProcess}
 	X_n= \max\{X_{n-1}^{\alpha/(\alpha-\delta)},U_n^{1/\delta}\},
 \end{equation}
 where $\delta \in (0,\alpha)$ and the $U_n$'s are i.i.d. $uniform(0,1)$ random variables.
 
 It wouldl be desirable to construct a second order version of such a process based on a suitable sequence of i.i.d. $uniform(0,1)$ random variables. We would like to have $X_n \sim PFD(\alpha)$ for $n=0,1,2,...$ and to have
 
 \begin{equation}\label{2ARPFProcess}
 	X_n= \max\{X^\beta_{n-2}, X^\gamma_{n-1}, U^\delta_n \},
 \end{equation}
 for $n=0,1,2,...$, where $\beta,\gamma,\delta$ take on suitable values.
 \bigskip
 Analogously, one might comsider the possible existence of a $k$th-order $PFD(\alpha)$ process of the form:
 
 \begin{equation}\label{kARPFProcess}
 	X_n= \max\{X^{\delta_k}_{n-k},...,X^{\delta_1}_{n-1}, U^\delta_n \},
 \end{equation}
 for $n=0,1,2,...$, where each of $\delta_k,...,\delta_1, \delta$  take on suitable values. At the present moment we have been unable to determine the nature of the long run disributions of such second and $k$-th order Markov processes. It is unlikely that a limiting power function distribution will be associated with such higher order processe. In order for such higher order processes have power function distribution marginal distributions, the definitions in (\ref{2ARPFProcess}) and (\ref{kARPFProcess}) will need to be modified. At present, how to achieve this remains unresolved.

 \section{Statistical Inference}
 \subsection{Estimation}
 In the following we derive consistent estimator of the parameters of the processes defined in  $(2.1)$ and $(3.1)$.  Also, it is clear from the definitions of the  two processes that bivariate joint density do not exists and all likelihood based inferences or Bayesian inference cannot be considered for estimating the parameters. In the following we use the method of moments to obtain consistent estimators for the process parameters.
 
 \subsubsection{Method of moments}
 For the observed sample paths $\{x_1,...,x_m\}$, define the following statistics
 \begin{eqnarray}
 	\bar{X}  &=& \frac{1}{m}\sum_{i=1}^{m} X_i  \\
 	P &=& \frac{1}{m} \sum_{i=1}^{m}  I(X_i <X_{i-1}).
 \end{eqnarray}
 where $I(.)$ is the indicator function. 
 \bigskip 
 For the given process in $(2.1)$ and $P(X_1 <X_2)$ for the case $\alpha > \beta$.  The following  two moment equations can be solved for estimates of $\alpha$ and $\beta$.
 \begin{eqnarray}
 	\overline{X}&=& \frac{\alpha +\beta}{\alpha + \beta +1}
 	\\
 	P&=&\frac{\alpha}{2\alpha+\beta}.
 \end{eqnarray}
  
 We obtain the following consistent estimates
 
 \begin{eqnarray}
 	\hat{\alpha}&=& \frac{P \overline{X}}{P \overline{X}- \overline{X} - P +1}
 	\\
 	\hat{\beta}&=& \frac{\hat{\alpha} - 2 \hat{\alpha} P}{P}.
 \end{eqnarray}
 
 This solution does not always satisfy the condition $\alpha > \beta$. Now, for the case $\alpha < \beta$  we have
 
 $$P(X_1<X_2)=\frac{\beta+\alpha}{2\beta+\alpha}.$$
 
 The following  two moment equations can be solved for estimates of $\alpha$ and $\beta$.
 \begin{eqnarray}
 	\overline{X}&=& \frac{\alpha +\beta}{\alpha + \beta +1}
 	\\
 	P&=&\frac{\beta+\alpha}{2\beta+\alpha}.
 \end{eqnarray}
 
 We can then obtain the following consistent estimates
 
 \begin{eqnarray}
 	\hat{\alpha}&=&  \frac{\overline{X}(1-2P)}{P(\overline{X}-1)}
 	\\
 	\hat{\beta}&=& \frac{ \hat{\alpha}(1-P)}{(2P-1)}.
 \end{eqnarray}
 
 In the following we derive consistent estimator of the process given in $(3.1)$.  Recalling equation $(3.9)$ the first population moment of the process in $(3.1)$ is
 \begin{eqnarray*}
 	E(X_n) = \frac{\alpha}{\alpha +1}
 \end{eqnarray*}
 and also 
 \begin{eqnarray*}
 	P(X_1 < X_0) = \frac{\alpha}{\alpha + \delta}.
 \end{eqnarray*}
 For the observed sample points $\{x_1,...,x_m\}$, equating the above two expressions to their with their  sample versionss, we have the folowinge two equations to solve.
 \begin{eqnarray}
 	\frac{\alpha}{\alpha +1} &=&  \bar{X} \\
 	\frac{\alpha}{\alpha +\delta} &=&  {P} .
 \end{eqnarray}
 
 Solving these equation will give estimates for $\alpha$ and $\delta$, i.e.,
 \begin{eqnarray}
 	\hat{\alpha}  &=& \frac{\bar{X}}{1- \bar{X}}  \\
 	\hat{\delta}  &=& \frac{\bar{X} - \bar{X} {P} }{{P}-\bar{X} {P}}.
 \end{eqnarray}

 \section{Applications}
 In the following two subsections, we illustrate a simulation study and give examples
 of real-life applications of the two power processes given in $(2.1)$ and $(3.1)$.
 
 \subsection{Simulation study}
 For the processes given in equation $(2.1)$ \& $(3.1)$ and for the same  moment estimators given in Section 4, we illustrate a simulation procedure on the behaviour of estimators by varying the parameter values with increasing sample path sizes.  
 
 We have simulated $2000$ data sets of sample path size  $m = 20, 30, 50, 100, 200, 500$ for the parameter  vectors $(\alpha=0.5,\beta=0.1)$,$(\alpha=1,\beta=2)$ and $(\alpha=2,\beta=1)$.  Similarlly,  for the same number of data sets with the same varying sample path sizes, we have simulated observations from the process in $(3.1)$ for the parameter vectors  
 $(\alpha=0.5, \delta =0.1)$, $(\alpha=4, \delta =2)$ and $(\alpha=2, \delta =0.5)$.

 We refer to the following Figures \ref{bfig12}--\ref{brfig13} for the bootstrapped distribution of each of the parameter estimates for the process in $(2.1)$. The numerical evidence suggests that as step size increases, moment estimates approach the true parameter values for both $\alpha$  and $\beta$ with standard errors that are decreasing as the step size increases.

 We refer to the following Figures \ref{b3.1fig12}--\ref{b3.1fig14} for the bootstrapped distribution of each of the parameter estimates for the process in $(3.1)$. The numerical evidence suggests that as sample step size increases, moment estimates approach the true parameter values for $\alpha$ with standard errors that are decreasing as the step size increases. Also, for the true parameter values for $\delta$ with moderate step sizes  convering to the true value and with decreasing in standard errors. Also, notice that when $0<\alpha<1$ both the estimates shows marginal bias but for step sizes greater than $100$ converges to the true value.

 \begin{figure} [h!]
 	\begin{subfigure}{9cm}
 		\centering
 		\includegraphics[width=8cm]{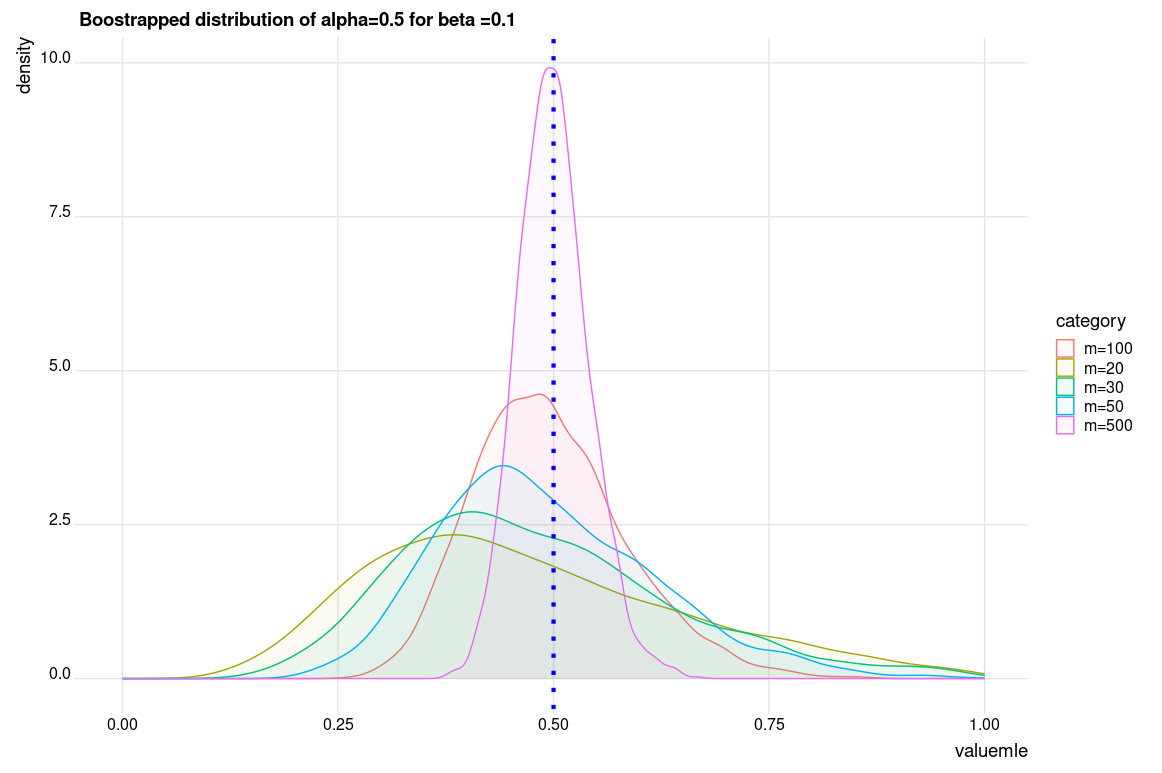}
 		\caption{Distribution of $\alpha = 0.5$ for $\beta =0.1$} 
 	\end{subfigure}
 	\begin{subfigure}{9cm}
 		\centering
 		\includegraphics[width=8cm]{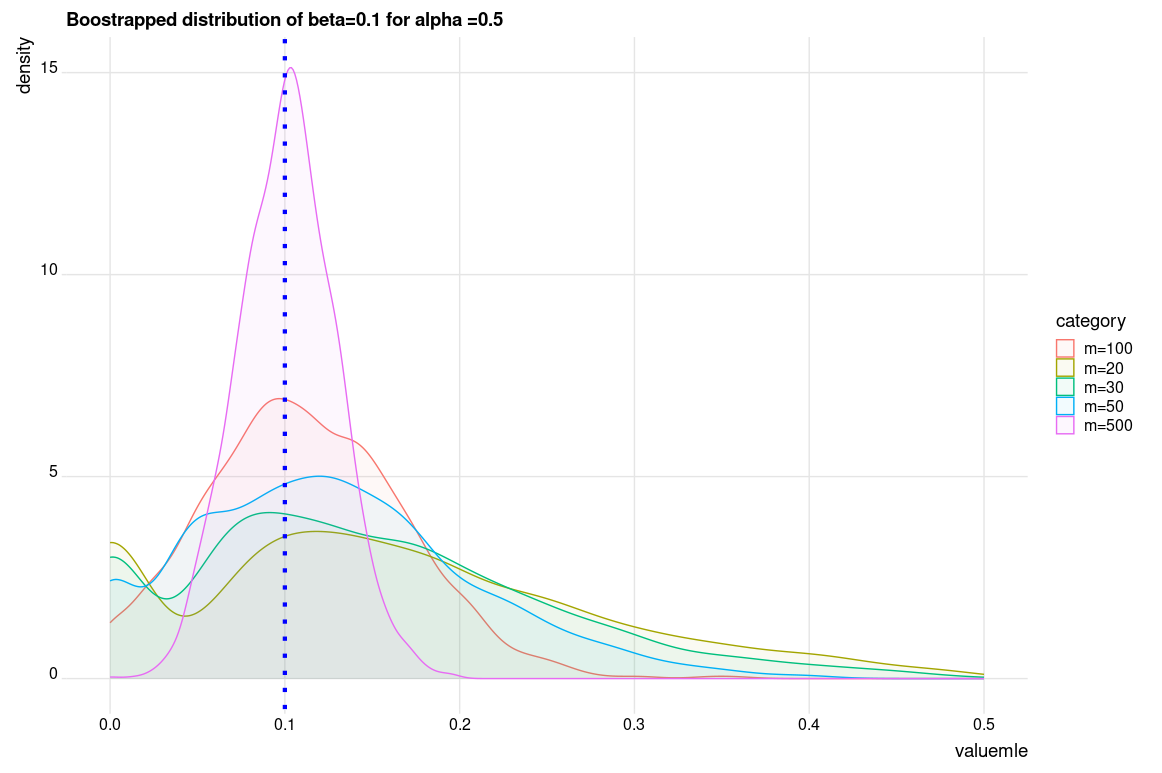}
 		\caption{Distribution of $\beta = 0.1$ for $\alpha =0.5$} 
 	\end{subfigure}
 	\caption{Boostrapped Distributions for the process $(2.1)$}
 	\label{bfig12}
 \end{figure}
 
 \begin{figure} 
 	\begin{subfigure}{9cm}
 		\centering
 		\includegraphics[width=8cm]{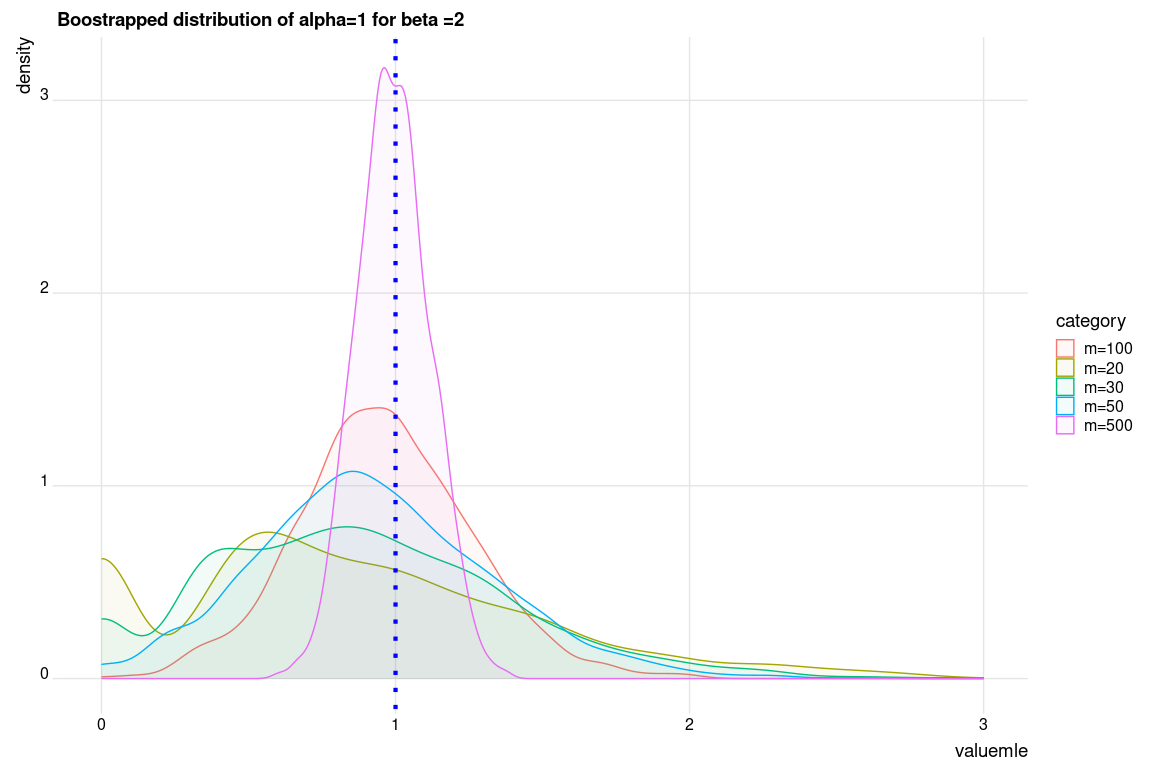}
 		\caption{Distribution of $\alpha =1$ for $\beta =2$} 
 	\end{subfigure}
 	\begin{subfigure}{9cm}
 		\centering
 		\includegraphics[width=8cm]{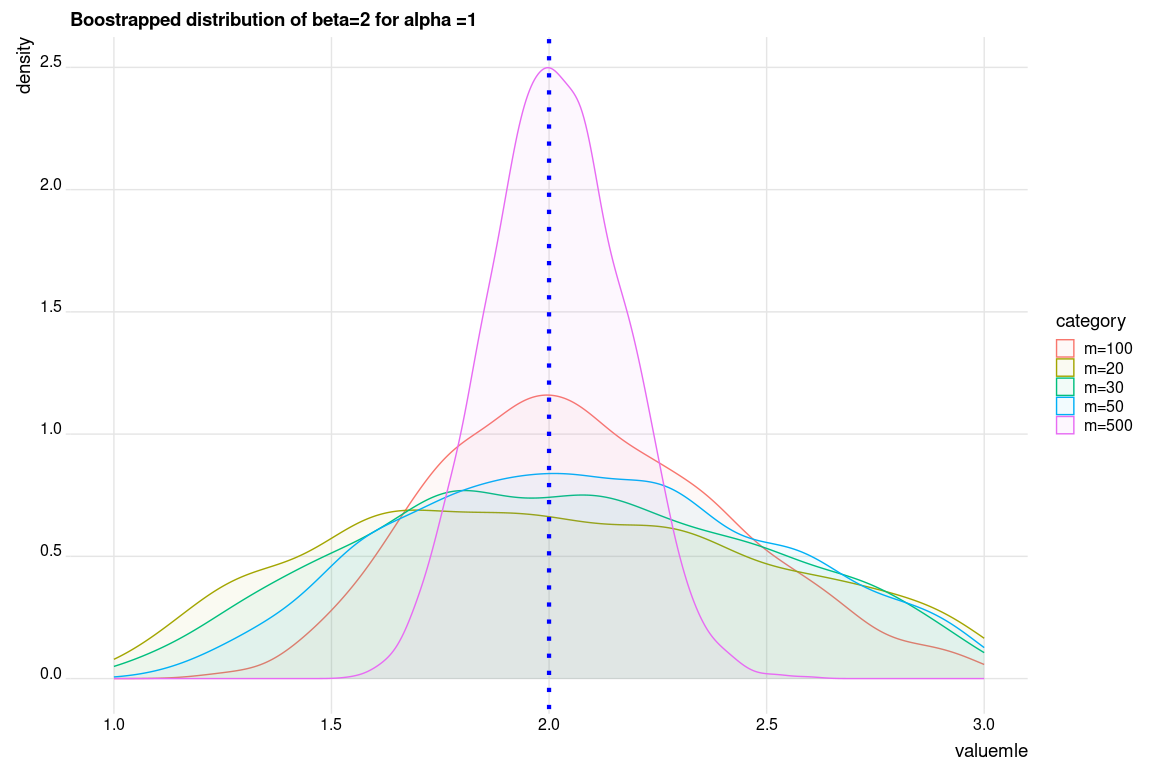}
 		\caption{Distribution of $\beta = 2$ for $\alpha =1$} 
 	\end{subfigure}
 	\caption{Boostrapped Distributions for the process $(2.1)$}
 	\label{fig13}
 \end{figure}
 
 \begin{figure} 
 	\begin{subfigure}{9cm}
 		\centering
 		\includegraphics[width=8cm]{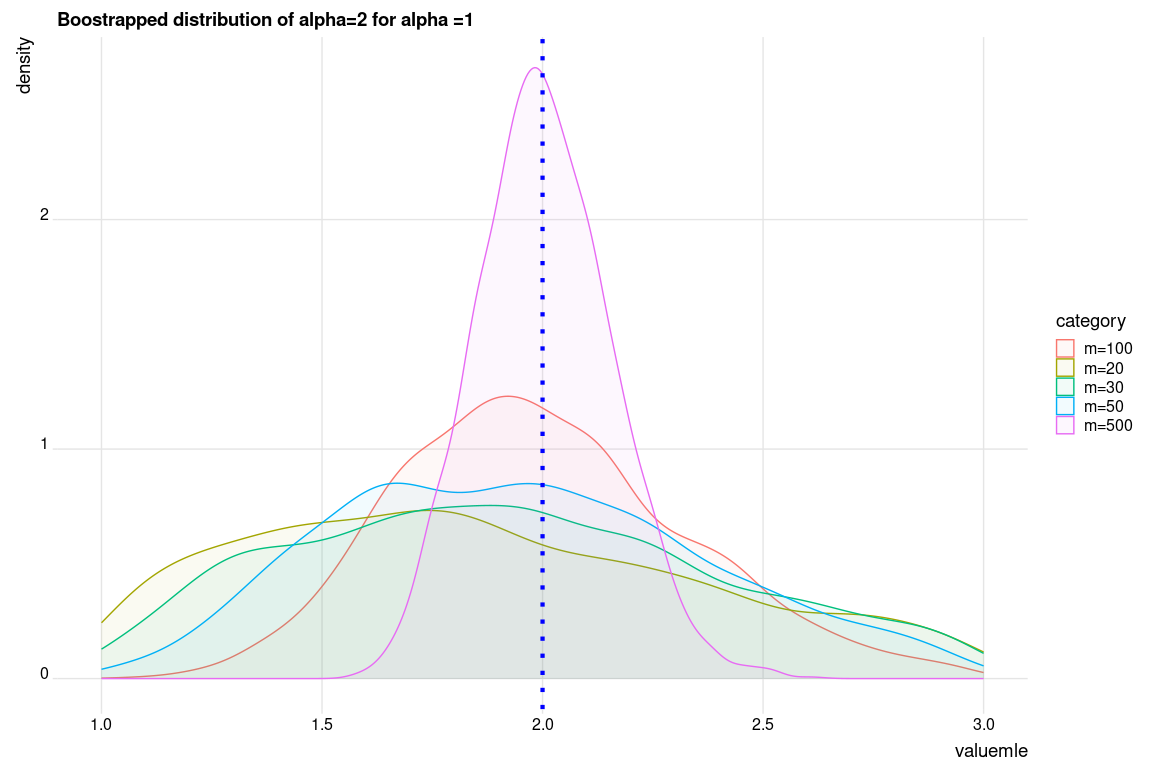}
 		\caption{Distribution of $\alpha =2$ for $\beta =1$} 
 	\end{subfigure}
 	\begin{subfigure}{9cm}
 		\centering
 		\includegraphics[width=8cm]{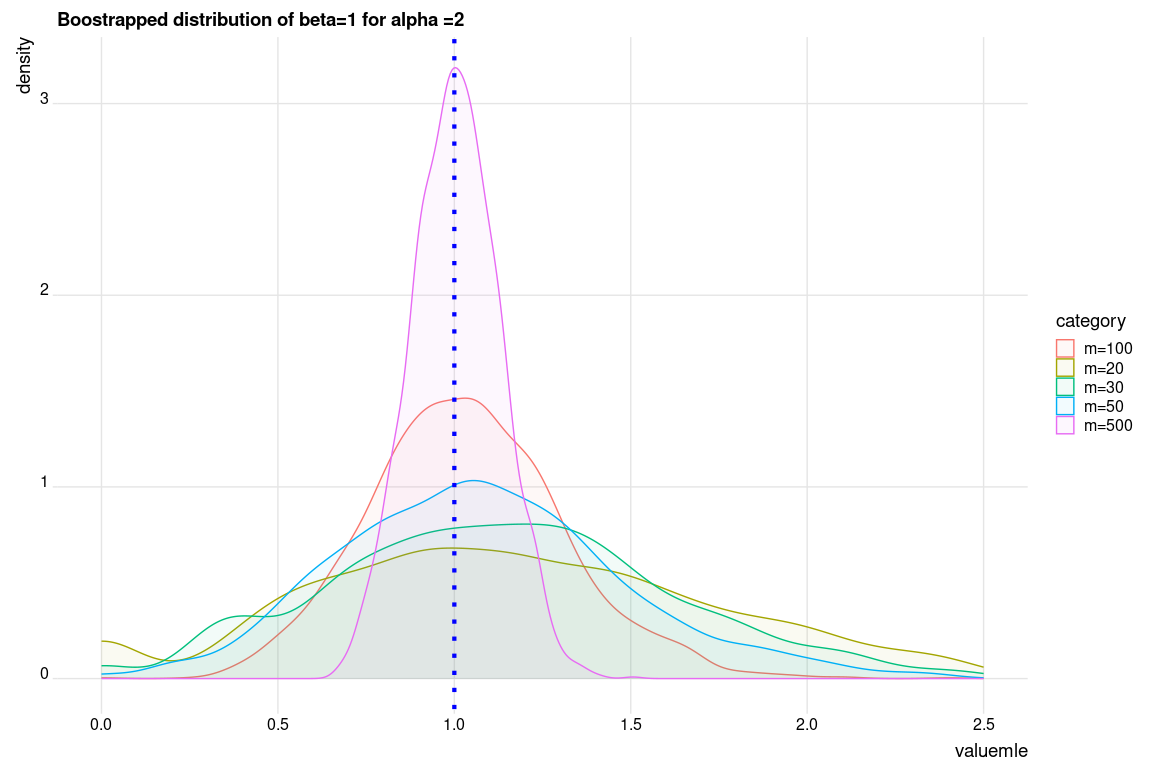}
 		\caption{Distribution of $\beta = 1$ for $\alpha =2$} 
 	\end{subfigure}
 	\caption{Boostrapped Distributions for the process $(2.1)$}
 	\label{brfig13}
 \end{figure}

 \bigskip

 \begin{figure} [h!]
 	\begin{subfigure}{9cm}
 		\centering
 		\includegraphics[width=8cm]{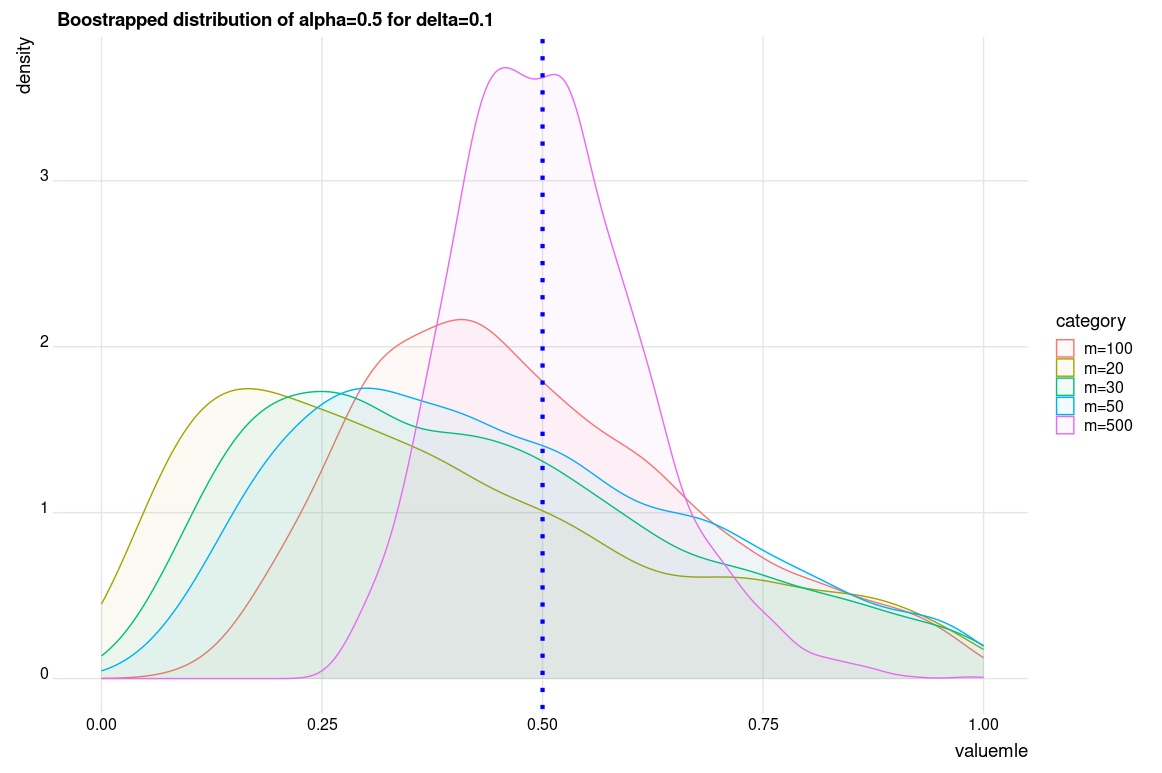}
 		\caption{Distribution of $\alpha =0.5$ for $\delta =0.1$} 
 	\end{subfigure}
 	\begin{subfigure}{9cm}
 		\centering
 		\includegraphics[width=8cm]{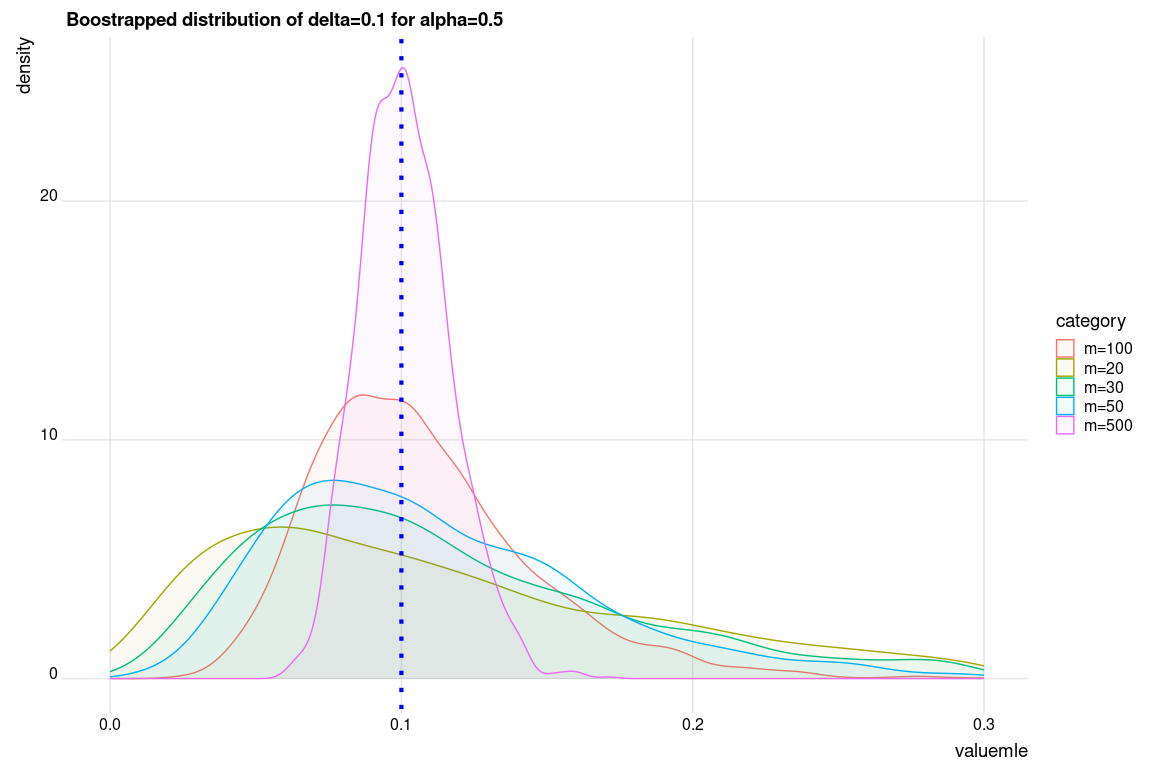}
 		\caption{Distribution of $\delta = 0.1$ for $\alpha =0.5$} 
 	\end{subfigure}
 	\caption{Boostrapped Distributions for the process $(2.1)$}
 	\label{brfig13}
 	\label{b3.1fig12}
 \end{figure}

\begin{figure} 
\begin{subfigure}{9cm}
 	\caption{Boostrapped Distributions for the process $(3.1)$}
  		\centering
 		\includegraphics[width=8cm]{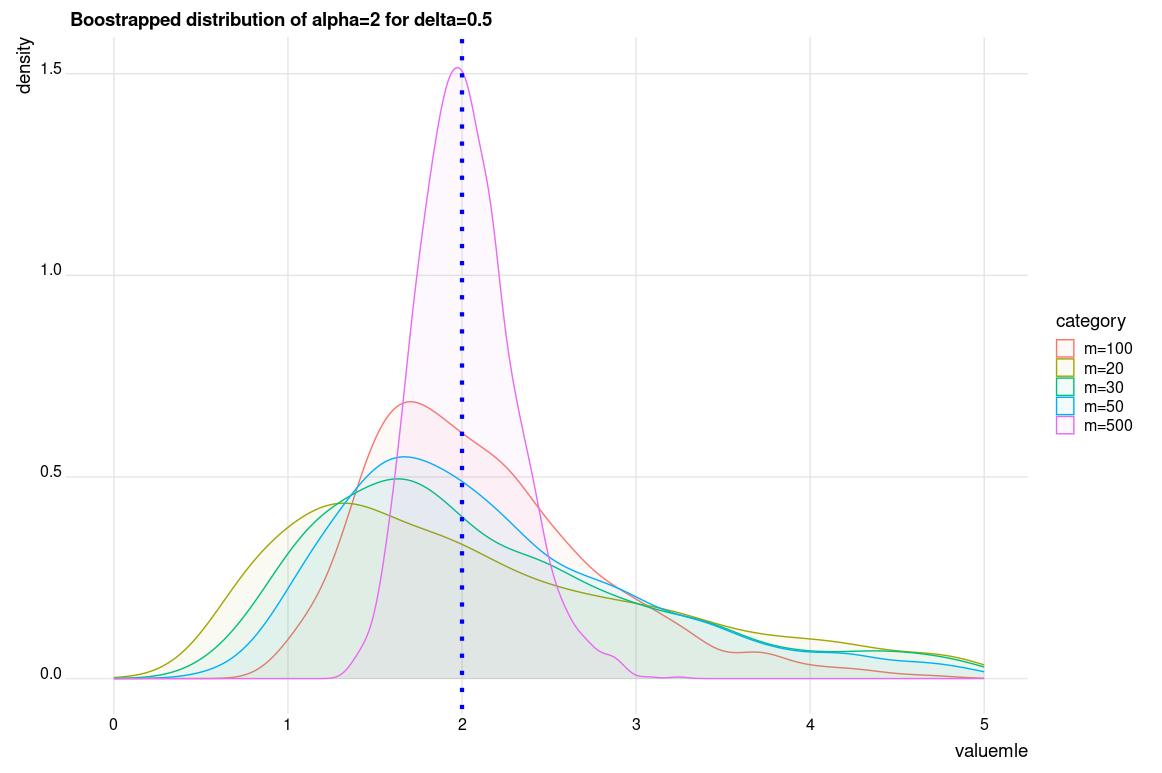}
 		\caption{Distribution of $\alpha =2$ for $\delta =0.5$} 
 	\end{subfigure}
 	\begin{subfigure}{9cm}
 		\centering
 		\includegraphics[width=8cm]{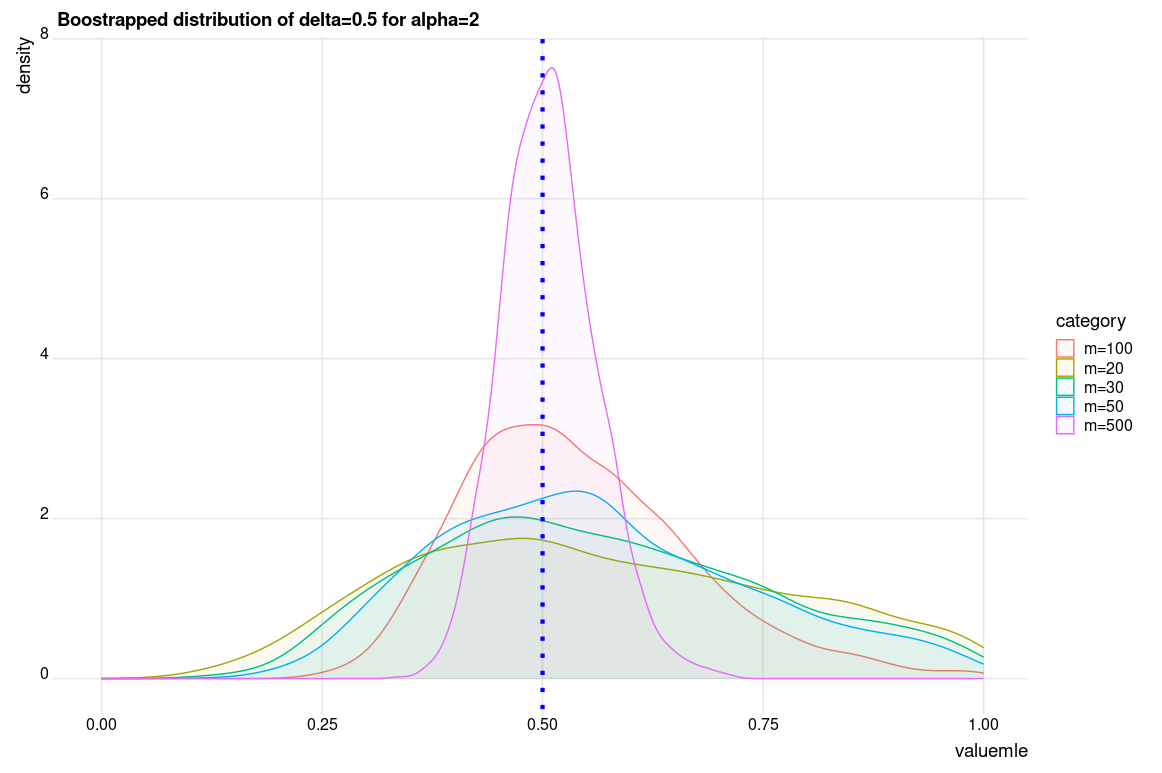}
 		\caption{Distribution of $\delta = 0.5$ for $\alpha =2$} 
 	\end{subfigure}
 	\caption{Boostrapped Distributions for the process $(3.1)$}
 	\label{fig13}
 \end{figure}

 \begin{figure}
 	\begin{subfigure}{9cm}
 		\centering
 		\includegraphics[width=8cm]{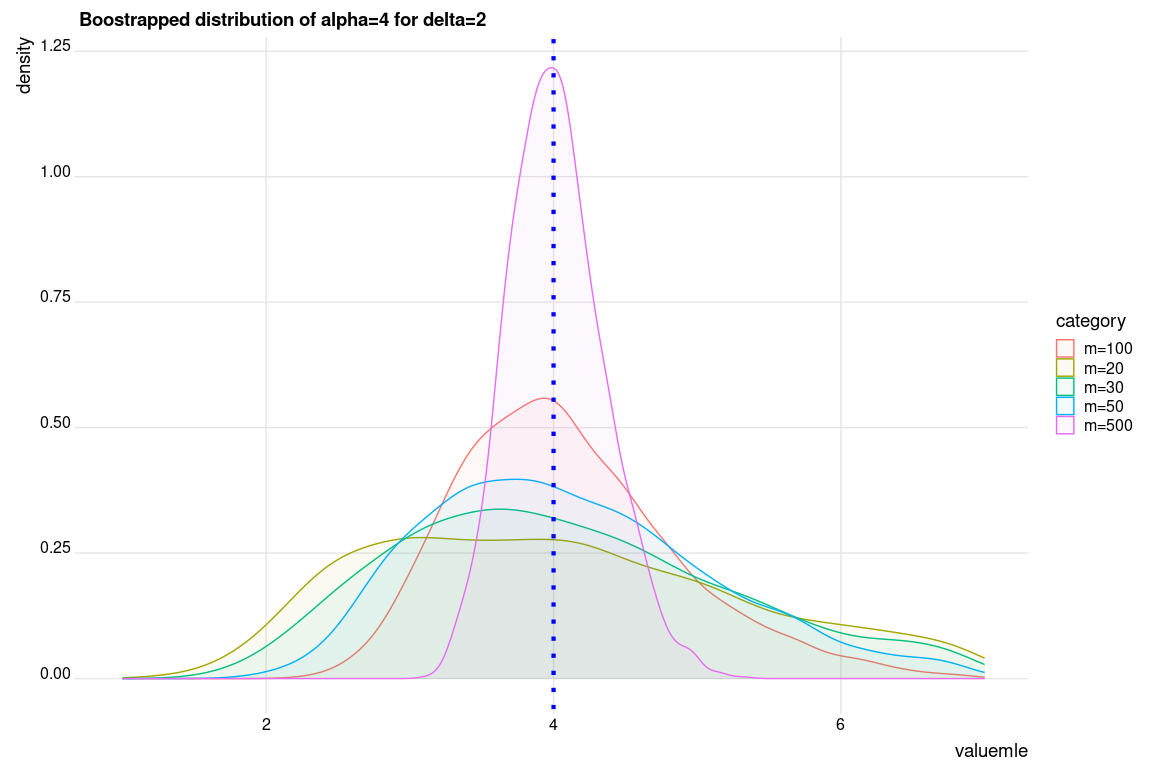}
 		\caption{Distribution of $\alpha =4$ for $\delta =2$} 
 	\end{subfigure}
 	\begin{subfigure}{9cm}
 		\centering
 		\includegraphics[width=8cm]{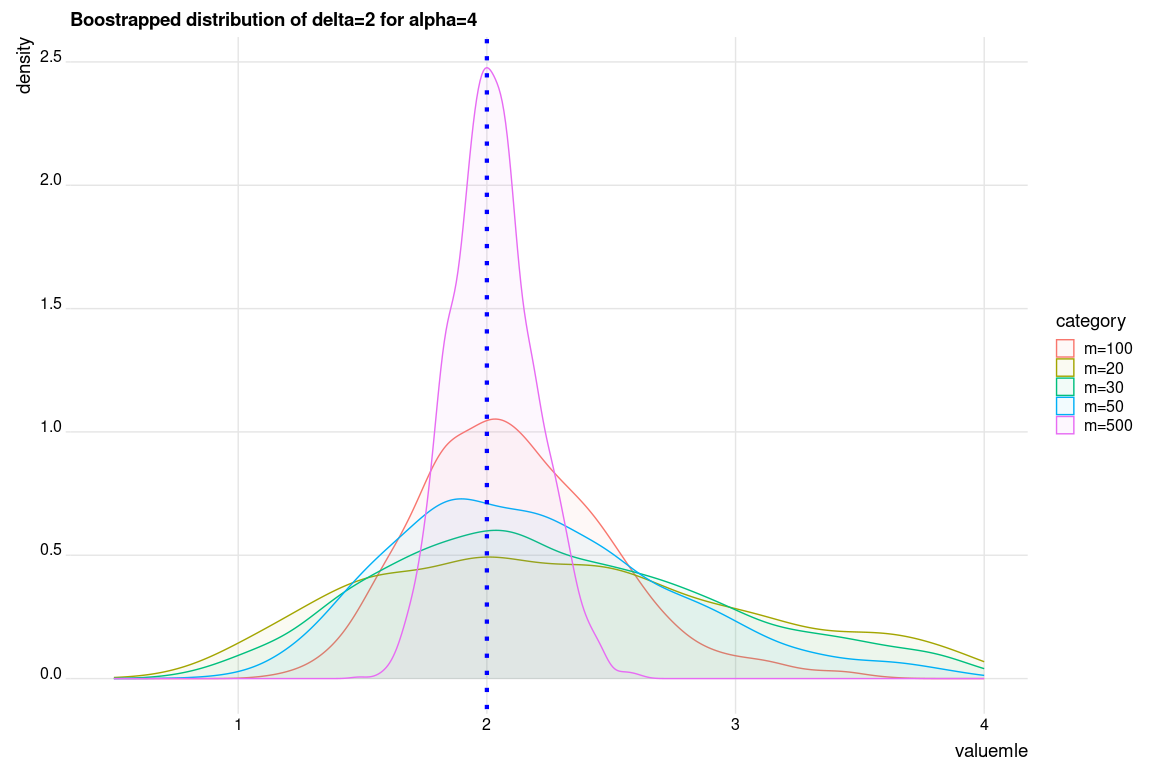}
 		\caption{Distribution of $\delta = 2$ for $\alpha =4$} 
 	\end{subfigure}
 	\caption{Boostrapped Distributions for the process $(3.1)$}
 	\label{b3.1fig14}
 \end{figure}
 
 \subsection{Real-life data}
 In the following we consider a Gold-price data, which is also mentioned in Kundu \cite{kd22}. We have considered a data set from 23 March 2023 to 24 April 2023, which is 30 days Gold price per gram in Indian.  
 
 Here we assume that the data generating process for the Gold price  having PRH process with unknown marginal distribution $F^*$ and underlying PFD process of either in $(2.1)$ or $(3.1)$. We used a non-parametric approach to estimate $F^*$ by the $F_m$ the empirical distribution function of the $m$ available data points. By using the $F_m$  transformation on the available Gold price data will be a sample froma PFD process.
 We refer to Figure \ref{gold1} for the Gold price of 90 days and its marginal transformation to have PFD process. 
We refer to Table \ref{table:1} and \ref{table:2} for the fitted values for the two process. Finally, by using mean square error mesaure distance between actula and the process in $(2.1)$ is $0.213$. Similarly, with the same measure distance between actual and the process in $(3.1)$ is $0.158$. Hence, we conclude that PFD process defined in equation $(3.1)$ fits the Gold price data better than Kundu process.

\begin{figure} [h!]
	\begin{subfigure}{9cm}
		\centering
		\includegraphics[width=8cm]{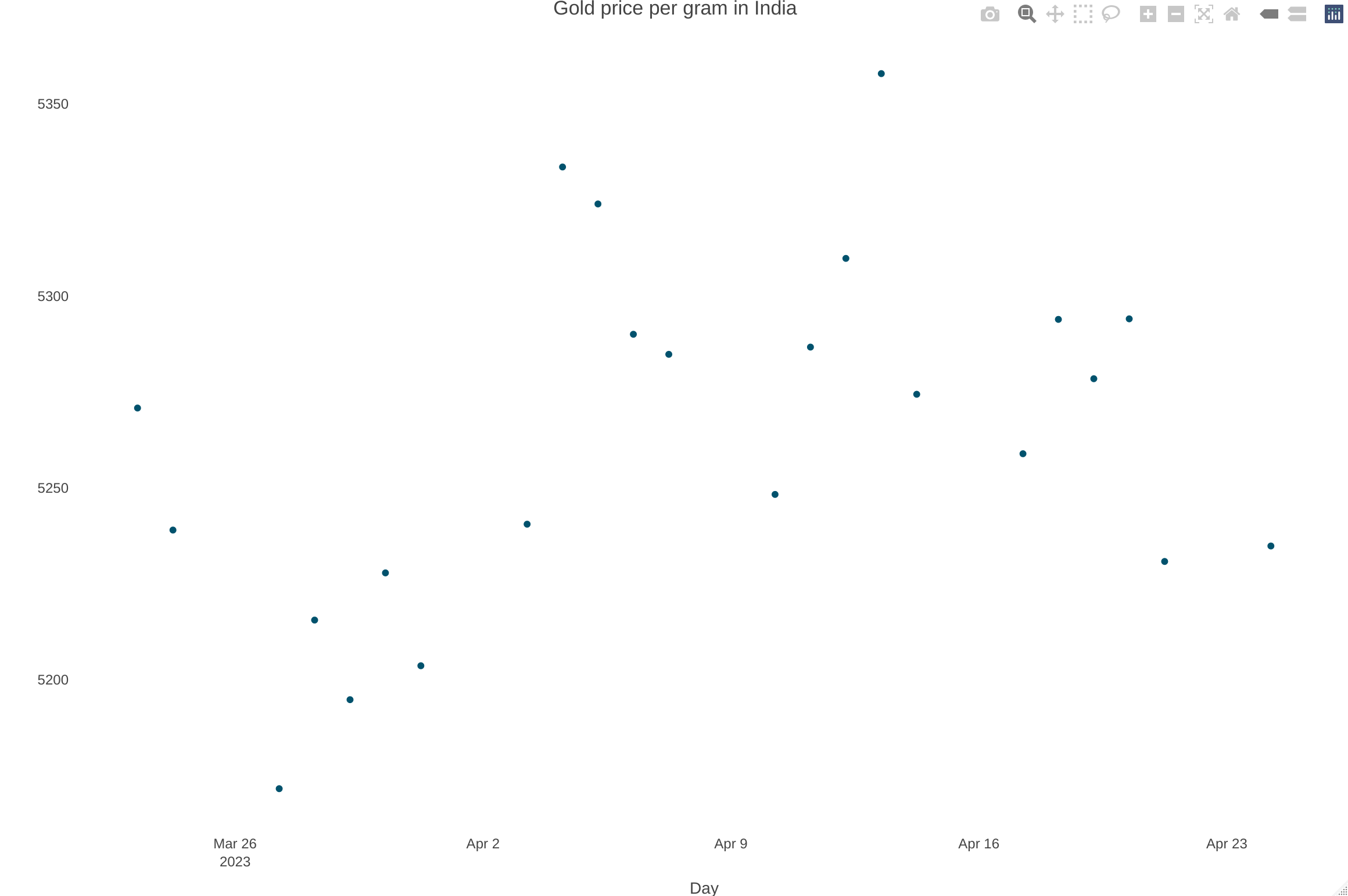}
		\caption{Gold price data of 30 days} 
	\end{subfigure}
	\begin{subfigure}{9cm}
		\centering
		\includegraphics[width=8cm]{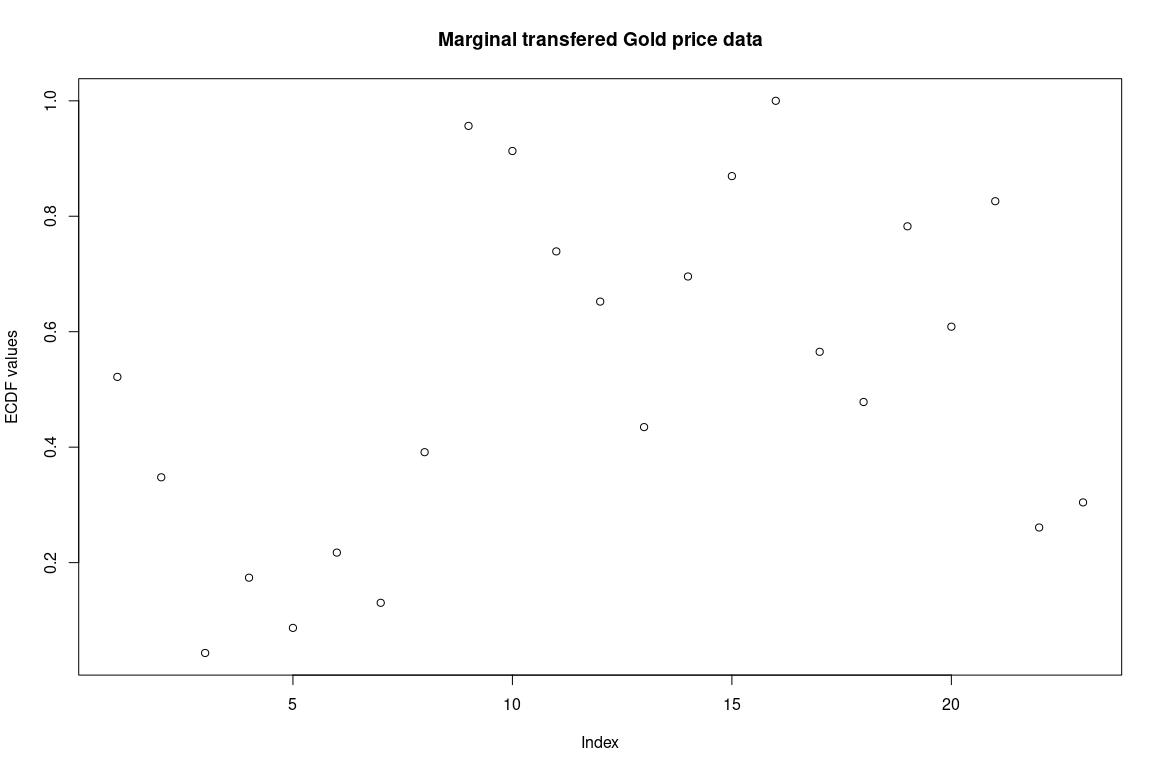}
		\caption{Marginal Transfered data} 
	\end{subfigure}
\caption{Gold price data analysis} 
	\label{gold1}
\end{figure}

\begin{table}[h!]
	\centering
	\begin{tabular}{||c | c ||} 
		\hline
		Parameters & Estimates \\ [0.5ex] 
		\hline\hline
		$\alpha$ & $0.839$ \\ 	\hline
		$\beta$ & $0.251$ \\ 	\hline
		auto-correlation & $-0.780$\\  [1ex] 
		\hline
	\end{tabular}
	\caption{Gold price data fitted to $(2.1)$ process}
	\label{table:1}
\end{table}

\begin{table}[h!]
	\centering
	\begin{tabular}{||c | c ||} 
		\hline
		Parameters & Estimates \\ [0.5ex] 
		\hline\hline
		$\alpha$ & $1.091$ \\ 	\hline
		$\delta$ &  $1.000$\\ 	\hline
		auto-correlation & $0.118$ \\  [1ex] 
		\hline
	\end{tabular}
	\caption{Gold price data fitted to $(3.1)$ process}
	\label{table:2}
\end{table}

\section{Conclusion}
By considering power function distribution process, which is a generalization of all the PRH processes with suitable marginal transformation. 
We have considered two PFD processes: stationary non-Markovian (called Kundu process) and a new process with stationary Markovian properties. We have derived the marginal and bivariate distributional properties and included moment computation up to the auto-correlation function of the two processes. Since the derived bivariate distribution does not have a density, likelihood-based inferences such as maximum likelihood and the Bayesian approach cannot be used for estimating the parameters of the process. However, we have derived one more property of the process and obtained a simple expression to obtain consistent estimators of the parameters of the processes. We also included a simulation study of the estimators with varying sample step sizes and illustrated the behaviour of the estimators. We have also included an example of an application to the Gold price per gram data. Finally, we also had a short note on the higher-order version of the two processes. As mentioned earlier, we can transform our process marginally to be a PRH process, and that consideration of such processes will be the subject of a further report.

\section*{Acknowledgment}
The second author's research was sponsored by the  DST Inspir. The author Manjunath's research was sponsored by the Institution of Eminence (IoE),
University of Hyderabad (UoH-IoE-RC2-21-013)

\end{document}